\def\ignore #1 {}
\newtheorem{lemma}{Lemma}
\newtheorem{thm}{Theorem}
\newtheorem{prop}{Proposition}
\newtheorem{proposition}{Proposition}
\newtheorem{corollary}{Corollary}
\newtheorem{remark}{Remark}
\newtheorem{ex}{Example}
\newtheorem*{ack}{Acknowledgments}
\begin{document}

\title{Meromorphic Line Bundles and Holomorphic Gerbes}
\author{Edoardo Ballico and Oren Ben-Bassat}
 \maketitle

\centerline{\bf }


{\tiny \tableofcontents }




\begin{abstract} We show that any complex manifold that has a divisor whose normalization has non-zero first Betti number either has a non-trivial holomorphic gerbe which does not trivialize meromorphicly or a meromorphic line bundle not equivalent to any holomorphic line bundle.  Similarly, higher Betti numbers of divisors correspond to higher gerbes or meromorphic gerbes.  We give several new examples.
\end{abstract}
\section{Introduction}\label{intro}  
Let $X$ be a complex manifold.  Unless stated otherwise, we always work in the classical analytic topology.   Let $\mathcal{O}$ denote the sheaf of holomorphic functions.   Let $\mathcal{O}^{\times}$ denote the nowhere vanishing holomorphic functions and $\mathcal{M}^{\times}$ the non-zero meromorphic functions Consider the short exact sequence of sheaves of groups on $X$  
\begin{equation}\label{eqn:HolMer}1 \to \mathcal{O}^{\times} \to \mathcal{M}^{\times} \to \mathcal{M}^{\times}/\mathcal{O}^{\times} \to 1
\end{equation}
and its induced short exact sequence of cohomology groups 
\small
\begin{equation}\label{eqn:decom}
1 \to H^{p}(X, \mathcal{M}^{\times})/H^{p}(X, \mathcal{O}^{\times}) \to H^{p}(X,\mathcal{M}^{\times}/\mathcal{O}^{\times}) \to \text{ker}[H^{p+1}(X,\mathcal{O}^{\times}) \to H^{p}(X,\mathcal{M}^{\times})] \to 1.
\end{equation}
\normalsize
We use $Div(X)=H^{0}(\mathcal{M}^{\times}/\mathcal{O}^{\times})$ to denote the group of divisors on $X$ and $IDiv(X)$ to denote the set of irreducible divisors.
We will relate the topology of divisors inside $X$ to the middle term by producing an injective map 
\[B: H^{p}(\tilde{D},\mathbb{Z}) \to  H^{p}(X,\mathcal{M}^{\times}/\mathcal{O}^{\times})
\]
from the cohomology of the normalization $\tilde{D}$ of any divisor $D$ in $X$.   In the case $p=1$ one can then consider the resulting classes in $H^{2}(X,\mathcal{O}^{\times})$ which are represented by holomorphic gerbes or if such a class is trivial, one produces classes represented by meromorphic line bundles.   Most of our examples then use techniques of classical algebraic geometry to produce divisors whose normalization has interesting topology.  We comment on the analytic and topological characteristics of the resulting gerbes.
\begin{ack}
The first author was partially supported by MIUR and GNSAGA of INdAM (Italy).  The second author would like to thank the University of Haifa for travel support and Marco Andreatta, Fabrizio Catanese,  Elizabeth Gasparim, the University of Trento and Fondazione Bruno Kessler for supporting his stay in Trento.
\end{ack}
\subsection{Comparison to other works}
Before this note was finished we found the article \cite{ChKeLe10} of 
X. Chen, M. Kerr, and J. Lewis  that produced many examples of non-trivial meromorphic line bundles on smooth projective complex algebraic varieties.  This contradicts the sometimes heard claim that meromorphic line bundles on smooth projective complex algebraic varieties are trivial.  A similar yet more general construction was realized independently by the second author, so we will give here a rough comparison.  Let $X$ be a complex manifold.  Suppose that we have a non-constant meromorphic function $f$ on $X$ whose divisor $div(f)=D =\sum m_{i} D_{i}$ is supported on a normal subvariety $\cup_{\{i | m_{i} \neq 0\}} D_{i}$ of $X$.  Let $n \in H^{1}(X,\mathbb{Z})$ be some cohomology class which restricts to a non-zero element in at least one of the $H^{1}(D_{i},\mathbb{Z})$ represented in $div(f)$ by a non-zero coefficient $m_{i}$.  Consider the image of $f \otimes n$ under the cup product
\[\mathcal{M}^{\times}(X) \otimes H^{1}(X,\mathbb{Z}) \to H^{1}(X,\mathcal{M}^{\times}).
\]
Now its easy to see that the divisor along $D$ of the image of our element is just $\sum m_{i}n|_{D_{i}}$ and so by our assumption on the class of $n$, the class in $H^{1}(X,\mathcal{M}^{\times})$ is non-trivial.  This kind of construction appeared in  \cite{ChKeLe10} where it was shown that in general, $H^{i}(X,\mathcal{M}^{\times})$ do not vanish when $X$ is smooth and projective.  Upon reading about this construction, one may wonder if the divisors can be used directly, not just to measure non-triviality, but to produce themselves meromorphic line bundles.  This is our approach.  Such a philosophy is inherent in Brylinski's work \cite{Br1994} in which he outlined a framework for the categorification of Beilinson's regulator maps from algebraic K-theory to Deligne cohomology.  Both  \cite{ChKeLe10}  and \cite{Br1994} are expressed in terms of sheaves and also cover higher cohomology classes.  In Brylinski's case, he would produce starting from some integral cohomology classes on divisors a pair consisting of a holomorphic n-gerbe and a meromorphic trivialization.   When this holomorphic n-gerbe is trivial, a trivialization of it is a meromorphic (n-1)gerbe.  So for example, a meromorphic line bundle is a meromorphic trivialization of a trivial holomorphic gerbe.  As a consequence of the current paper we produce new examples of complex manifolds admitting non-trivial meromorphic line bundles, perhaps the most surprising being the affine plane $\mathbb{C}^{2}$   and the projective plane $\mathbb {P}^2$ (see subsection \ref{c2}).   Constructions of this type contradict statements sometimes seen in the literature such as an incidental remark in line 2 of the proof of Theorem 9 of \cite{Go1987} concerning the absence of non-trivial meromorphic line bundles on smooth projective varieties.  Other examples of such statements are given in \cite{ChKeLe10}.  We answer the open question of \cite{ChKeLe10} in subsection \ref{Question}.  Some material on material on holomorphic gerbes both old and recent can be found in \cite{Chatterjee}, \cite{Br1993}, \cite{HL}, \cite{Hao}, \cite{Be2009} and \cite{Hi2010}.

\section{The Main Technical Tools}\label{mero}

\subsection{The Splitting Construction and the map $B$}
The construction in this section was inspired by Brylinski's paper \cite{Br1994} where he give an algebro-geometric (or complex analytic) construction of holomorphic gerbes analogous to the construction of a line bundles from a divisor.   We produce a splitting of one of the maps used in his construction which we call $div$ below.
Let $D$ be a divisor on $X$, $Y$ the support of $D$ and $\nu_{Y}$ the normalization $\tilde{Y}$ of $Y$ followed by the inclusion of $Y$ into $X$.
\[\nu_{Y}: \tilde{Y} \to X
\]
We have a surjective map of sheaves of groups
\[div: \mathcal{M}^{\times}/\mathcal{O}^{\times}  \to \nu_{Y*}\mathbb{Z}_{\tilde{Y}}
\]
\[f \mapsto [C \mapsto div(f, \nu_{Y}(C))].
\]
This means the following: for any open set $U$ and each component $C$ of $\nu_{Y}^{-1}(U \cap Y)$ we assign the coefficient of $\nu_{Y}(C)$ in $div(f) \cap U$. 
This map has a splitting as sheaves of groups.

In order to define the splitting, we first need to choose an open cover $\{U_{i} \}$ with the property that all irreducible components $Y^{(\alpha)}_{i}$ of $U_{i} \cap Y$ are locally irreducible and there exist holomorphic functions $f^{(\alpha)}_{i}$ on $U_{i}$ whose divisor is $Y^{(\alpha)}_{i}$.    The fact that such a cover exists follows from the noetherian property of complex analytic spaces: in
$\mathcal{O}_{Y,y}$ the ideal $(0)$ has a primary decomposition and each irreducible component at $y$ (there are finitely many) stays irreducible in a small neighborhood of $y$.    See Proposition 11 page 55--56 and Corollary 1 page 68 of \cite{Na1966} for these facts.

Note that on $U_{i}$ we can identify $\nu_{Y*}\mathbb{Z}$ with $\prod_{\alpha}j^{(\alpha)}_{*}\mathbb{Z}$ where the maps $j^{(\alpha)}$ are the restrictions of $\nu_{Y}$ to the component  $Y^{(\alpha)}_{i}$:
\[j^{(\alpha)}: Y^{(\alpha)}_{i} \to X.
\]
We now define maps of sheaves of groups 
\begin{equation}\label{eqn:Splitting_i}
s_{i}: (\nu_{Y*}\mathbb{Z}_{\tilde{Y}})|_{U_{i}} \to \mathcal{M}^{\times}|_{U_{i}}
\end{equation}
\[
n_{i}^{(\alpha)} \mapsto \prod_{\alpha} (f^{(\alpha)}_{i})^{n_{i}^{(\alpha)}}.  
\]

To see that these maps glue together to a map of sheaves
\begin{equation}\label{eqn:Splitting}
s: \nu_{Y*}\mathbb{Z}_{\tilde{Y}} \to \mathcal{M}^{\times}/\mathcal{O}^{\times}
\end{equation}
 we evaluate on $U_{i} \cap U_{j}$ the function 
\[(s_{i}(n_{i}^{(\alpha)}))(s_{j}(n_{j}^{(\beta)}))^{-1} = (\prod_{\alpha} (f^{(\alpha)}_{i})^{n_{i}^{(\alpha)}})(\prod_{\beta} (f^{(\beta)}_{j})^{n_{j}^{(\beta)}})^{-1}
\]
in the case where $n_{i}^{(\alpha)}$ and $n_{j}^{(\beta)}$ agree on the intersection.  We need to show that this function is holomorphic and nowhere vanishing.
For any component of $Y \cap U_{i} \cap U_{j}$ appearing in the intersection  $U_{i} \cap U_{j}$ there are unique corresponding components $Y^{(\alpha)}_{i}$ and $Y^{(\beta)}_{j}$ and the function $f^{(\alpha)}_{i}$ can be written as $f^{(\beta)}_{j}$ multiplied by a holomorphic function.  Also, for any such $\gamma$,  $n_{i}^{(\alpha)}$ and $n_{j}^{(\beta)}$ are the same numbers $n^{(\gamma)}_{i,j}$.  On the other hand, for any $\alpha$ or $\beta$ not corresponding to a component of the intersection, the functions $f^{(\alpha)}_{i}$ and $f^{(\beta)}_{j}$ are already holomorphic and nowhere vanishing.  It is also easy to see that the definition of $s$ is independent of the choices made in the construction.

\begin{lemma}\label{splitting}
This induces a splitting of the previous map:
\[div \circ s = \text{id}_{\nu_{Y*}\mathbb{Z}_{\tilde{Y}}}.
\]
As a result, for any complex manifold $X$ and any divisor $D$ in $X$ with support $Y$ there is a subgroup 
\[H^{p}(\tilde{Y},\mathbb{Z}) \subset H^{p}(X,\mathcal{M}^{\times}/\mathcal{O}^{\times} )
\]
where $\tilde{Y}$ is the normalization of $Y$.
\end{lemma}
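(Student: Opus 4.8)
The plan is to verify the splitting identity $div \circ s = \text{id}$ on each member of the chosen cover (equivalently, on stalks), and then to pass to cohomology by pure functoriality, together with one geometric input about the normalization map. Since all sheaves in sight ($\mathcal{O}^{\times}$, $\mathcal{M}^{\times}$, $\mathcal{M}^{\times}/\mathcal{O}^{\times}$, and $\nu_{Y*}\mathbb{Z}_{\tilde{Y}}$) are sheaves of \emph{abelian} groups, $H^{p}(X,-)$ is an additive functor and a split monomorphism of sheaves induces a split monomorphism on $H^{p}$.

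First I would check the local identity. Over $U_{i}$, under the identification $(\nu_{Y*}\mathbb{Z}_{\tilde{Y}})|_{U_{i}} \cong \prod_{\alpha} j^{(\alpha)}_{*}\mathbb{Z}$, the map $s_{i}$ sends a section $(n_{i}^{(\alpha)})_{\alpha}$ to the class of $\prod_{\alpha}(f^{(\alpha)}_{i})^{n_{i}^{(\alpha)}}$ in $\mathcal{M}^{\times}/\mathcal{O}^{\times}$. By construction $div(f^{(\alpha)}_{i}) = Y^{(\alpha)}_{i}$ on $U_{i}$ and the $Y^{(\alpha)}_{i}$ are precisely the locally irreducible components of $U_{i}\cap Y$, so the divisor of this product is $\sum_{\alpha} n_{i}^{(\alpha)} Y^{(\alpha)}_{i}$. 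Applying $div$ means reading off, for each component $C$ of $\nu_{Y}^{-1}(U_{i}\cap Y)$, the coefficient of $\nu_{Y}(C)$; local irreducibility of the $Y^{(\alpha)}_{i}$ guarantees that these components are exactly the $Y^{(\alpha)}_{i}$, one $C$ per index $\alpha$, so the coefficient recovered at the $\alpha$-th one is exactly $n_{i}^{(\alpha)}$. Hence $div \circ s_{i} = \text{id}$ on $U_{i}$, and since $s$ and $div$ are globally defined maps of sheaves (the $s_{i}$ glue, as shown in the construction, and the result is independent of the choices) this yields $div \circ s = \text{id}_{\nu_{Y*}\mathbb{Z}_{\tilde{Y}}}$.

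Next I would deduce the cohomological statement. Applying $H^{p}(X,-)$ to $s$ and $div$ gives homomorphisms with $H^{p}(X,div)\circ H^{p}(X,s) = H^{p}(X, div\circ s) = \text{id}$, so $H^{p}(X,s)$ is a split injection, exhibiting $H^{p}(X,\nu_{Y*}\mathbb{Z}_{\tilde{Y}})$ as a direct summand of $H^{p}(X,\mathcal{M}^{\times}/\mathcal{O}^{\times})$. It then remains to identify $H^{p}(X,\nu_{Y*}\mathbb{Z}_{\tilde{Y}})$ with $H^{p}(\tilde{Y},\mathbb{Z})$: here I would use that $\nu_{Y}$, the composition of the normalization $\tilde{Y}\to Y$ with the closed embedding $Y\hookrightarrow X$, is a finite morphism, so it is proper with finite (hence zero-dimensional) fibres, its higher direct images vanish, and the Leray spectral sequence collapses.

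The two points that require genuine care, as opposed to formal manipulation, are the geometric inputs. One is the bookkeeping in the local identity: that on a sufficiently fine cover the analytic components of $\nu_{Y}^{-1}(U_{i}\cap Y)$ biject with the locally irreducible components $Y^{(\alpha)}_{i}$ of $U_{i}\cap Y$, each cut out by a single holomorphic $f^{(\alpha)}_{i}$ --- which is exactly the property the cover was arranged (via primary decomposition and the Noetherian property, using the cited results of \cite{Na1966}) to have. The other is the exactness of $\nu_{Y*}$ that makes the Leray collapse work, which rests on finiteness of the normalization morphism. Everything else is formal.
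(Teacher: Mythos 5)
Your proposal is correct and follows essentially the same route as the paper: verify $div\circ s_i=\mathrm{id}$ on each $U_i$ using the bijection between components of $\nu_Y^{-1}(U_i\cap Y)$ and the locally irreducible components $Y_i^{(\alpha)}$, conclude that $\nu_{Y*}\mathbb{Z}_{\tilde Y}$ is a direct summand of $\mathcal{M}^{\times}/\mathcal{O}^{\times}$ (hence on cohomology), and identify $H^{p}(X,\nu_{Y*}\mathbb{Z}_{\tilde Y})$ with $H^{p}(\tilde Y,\mathbb{Z})$ via the collapse of the Leray spectral sequence for the finite, proper map $\nu_Y$. The only cosmetic difference is that the paper justifies the vanishing of the higher direct images by citing constructibility and proper base change from Dimca, where you simply invoke finiteness of the fibres; the underlying reason is the same.
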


{\bf Proof.}
In order to see that it is a splitting, it is enough to show it locally after restriction to each $U_{i}$. The components 
of $\nu_{Y}^{-1}(Y \cap U_{i})$ are precisely the $\nu_{Y}^{-1}(Y^{(\alpha)}_{i})$ and hence $\nu_{Y}(\nu_{Y}^{-1}(Y^{(\alpha)}_{i})) = Y^{(\alpha)}_{i}$.  The divisor of $\prod_{\alpha} (f^{(\alpha)}_{i})^{n^{(\alpha)}_{i}}$ along $Y^{(\alpha)}_{i}$ is $n^{(\alpha)}_{i}$.

Therefore 
\begin{equation}\label{CEqn} \mathcal{M}^{\times}/\mathcal{O}^{\times} = \nu_{Y*}\mathbb{Z}_{\tilde{Y}} \oplus \mathcal{C}_{Y}
\end{equation}
for some compliment sheaf of abelian groups $\mathcal{C}_{Y}$.
The normalization map $\nu_{Y}$ is a proper holomorphic map and its fibers are finite sets (\cite{Na1966}, part (a) of Definition 2 at page 114 and Theorem 4 at page 118, or \cite{f}, Appendix to Chapter 2). Furthermore Theorem  4.1.5 (i) (b) of \cite{D2004} says (as is true for the pushforward of a constructible sheaf under any proper analytic map) that the sheaf $\nu_{Y*}\mathbb{Z}_{\tilde{Y}}$ is constructible with respect to some analytic Whitney stratification of $X$.  Therefore by Theorem 4.1.9 of  \cite{D2004} $X$ has a cover by open sets $U$ such that 
\[H^{0}(U, R^{q}\nu_{Y*}\mathbb{Z}_{\tilde{Y}}) = {(R^{q}\nu_{Y*}\mathbb{Z}_{\tilde{Y}})}_{u}.
\]
for some point $u$ of $U$.
Finally by Theorem 2.3.26 of page 41 of \cite{D2004} we have
\[{(R^{q}\nu_{Y*}\mathbb{Z}_{\tilde{Y}})}_{u} = H^{q}(\nu_{Y}^{-1}(u),\mathbb{Z})
\]
which vanishes for $q>0$.  Therefore the Leray-Serre spectral sequence for $\nu_{Y}$ gives 

\[H^{p}(\tilde{Y},\mathbb{Z}) = H^{p}(X,\nu_{\tilde{Y}*} \mathbb{Z}) \subset H^{p}(X,\mathcal{M}^{\times}/\mathcal{O}^{\times} ).
\]

 \hfill $\Box$
 
Denote by $B$ the resulting group homomorphism 
\begin{equation}\label{eqn:Bdef}
B: \bigoplus_{D \in IDiv(X)} H^{p}(\tilde{D},\mathbb{Z}) \to  H^{p}(X,\mathcal{M}^{\times}/\mathcal{O}^{\times} ).
\end{equation}

\begin{proposition}\label{ee}
Let $X$ be an $n$-dimensional smooth and connected projective variety, $n \ge 1$. Fix an integer $t\ge 1$. Then $H^{2n-2}(X,\mathcal {M}^{\times}/\mathcal {O}^{\times })$ contains a subgroup isomorphic
to $\mathbb {Z}^t$.
\end{proposition}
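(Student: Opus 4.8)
The plan is to realize $\mathbb{Z}^{t}$ inside $H^{2n-2}(X,\mathcal{M}^{\times}/\mathcal{O}^{\times})$ as the cohomology $H^{2n-2}(\tilde{Y},\mathbb{Z})$ of the normalization of the support $Y$ of a well-chosen divisor $D$, and then to quote Lemma~\ref{splitting} (equivalently, to feed the classes through the map $B$ of \eqref{eqn:Bdef}). The divisor I would take is $D = D_{1}+\dots+D_{t}$, where $D_{1},\dots,D_{t}$ are $t$ distinct irreducible \emph{smooth} divisors on $X$; its support is $Y = D_{1}\cup\dots\cup D_{t}$. Since the normalization of a reduced variety is the disjoint union of the normalizations of its irreducible components, and each smooth $D_{i}$ is already its own normalization, we get $\tilde{Y} = D_{1}\sqcup\dots\sqcup D_{t}$ and hence
\[
H^{2n-2}(\tilde{Y},\mathbb{Z}) \;\cong\; \bigoplus_{i=1}^{t} H^{2n-2}(D_{i},\mathbb{Z}).
\]

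To produce the $D_{i}$ when $n\ge 2$, I would fix a projective embedding of $X$ and take the $D_{i}$ to be general hyperplane sections (or general members of any very ample linear system $|L|$). By Bertini's theorem a general such member is smooth and irreducible; it is then a connected, closed, oriented real $(2n-2)$-manifold, so $H^{2n-2}(D_{i},\mathbb{Z})\cong\mathbb{Z}$, being the top cohomology of such a manifold. Because $X$ has positive dimension, $|L|$ is a positive-dimensional linear system, hence has infinitely many members, and distinct members are distinct divisors (on a connected projective variety two sections with the same divisor differ by a nowhere-vanishing holomorphic function, i.e.\ a nonzero constant); so $t$ distinct smooth irreducible members can be selected. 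When $n=1$ the statement says $\mathrm{Div}(X)=H^{0}(X,\mathcal{M}^{\times}/\mathcal{O}^{\times})$ contains $\mathbb{Z}^{t}$, which holds because $\mathrm{Div}(X)$ is free on the infinitely many points of the curve $X$; in the above language one simply lets $D_{1},\dots,D_{t}$ be $t$ distinct points, so $D_{i}=\tilde{D}_{i}$ is a point, $2n-2=0$, and $H^{0}(D_{i},\mathbb{Z})=\mathbb{Z}$.

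Granting this, Lemma~\ref{splitting} applied to the divisor $D$ in degree $p=2n-2$ yields an inclusion of groups $H^{2n-2}(\tilde{Y},\mathbb{Z})\subset H^{2n-2}(X,\mathcal{M}^{\times}/\mathcal{O}^{\times})$, and the source contains $\bigoplus_{i=1}^{t}\mathbb{Z}\cong\mathbb{Z}^{t}$, completing the argument. There is no deep obstacle here: the proposition is essentially a packaging of Lemma~\ref{splitting}. The only steps that require care are the two standard inputs used above — Bertini irreducibility and smoothness for a general member of a very ample linear system on a variety of dimension $\ge 2$, and the existence of infinitely many (hence $t$) distinct such members on any positive-dimensional projective $X$ — together with the bookkeeping that identifies $\tilde{Y}$ with the disjoint union $\bigsqcup_{i}D_{i}$ even though the $D_{i}$ meet inside $X$.
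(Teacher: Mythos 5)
Your argument is correct and is essentially the paper's own proof: the authors likewise take $Y=D_1\cup\cdots\cup D_t$ for $t$ distinct smooth connected hypersurfaces, note that $\tilde{Y}=D_1\sqcup\cdots\sqcup D_t$ so that $H^{2n-2}(\tilde{Y},\mathbb{Z})\cong\mathbb{Z}^t$, and apply Lemma~\ref{splitting}. You merely supply extra (harmless) detail — Bertini for the existence of the $D_i$ and the separate treatment of $n=1$ — that the paper leaves implicit.
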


{\bf Proof.}
Fix $t$ distinct smooth and connected hypersurfaces $D_1,\dots ,D_t$ of $X$. Set 
\[Y= D_1\cup \cdots \cup D_t.\] Thus the normalization $\widetilde{Y}$ of $Y$ is the disjoint union
$D_1\sqcup \cdots \sqcup D_t$. Thus $H^{2n-2}(\widetilde{Y},\mathbb {Z}) \cong \mathbb {Z}^t$. Apply Lemma \ref{splitting}.

 \hfill $\Box$

\subsection{Algebraic Structure}
Suppose now that $X=X_{Zar}(\mathbb{C})$ where $X_{Zar}$ is some regular scheme of finite type over $\mathbb{C}$.  Consider the inclusion map 
\[j:X \to X_{Zar}
\]
We have the following short exact sequences of sheaves and groups together with morphisms between them
\[
\xymatrix{\ar @{} [dr] |{}  & 1\ar[d] & 1\ar[d] & 1\ar[d] & &\\
1 \ar[r] & j^{-1}\mathcal{O}_{alg}^{\times} \ar[r] \ar[d]&  j^{-1}\mathcal{M}_{alg}^{\times}  \ar[r] \ar[d]&  j^{-1}(\mathcal{M}_{alg}^{\times}/\mathcal{O}_{alg}^{\times}) \ar[r] \ar[d] & 1\\
1 \ar[r] & \mathcal{O}^{\times} \ar[d] \ar[r] & \mathcal{M}^{\times} \ar[d] \ar[r] & \mathcal{M}^{\times}/\mathcal{O}^{\times} \ar[d] \ar[r] & 1 \\
1 \ar[r] & \mathcal{O}^{\times}/  j^{-1}\mathcal{O}_{alg}^{\times} \ar[r] \ar[d] & \mathcal{M}^{\times}/ j^{-1}\mathcal{M}_{alg}^{\times} \ar[r] \ar[d] & (\mathcal{M}^{\times}/\mathcal{O}^{\times} )/(j^{-1}(\mathcal{M}_{alg}^{\times}/\mathcal{O}_{alg}^{\times}))\ar[r] \ar[d]& 1 &
\\
& 1& 1 & 1. & &
}
\]
The sheaf $\mathcal{M}_{alg}^{\times}$ is a constant sheaf of non-zero rational functions and the pullback is the constant sheaf 
\[j^{-1}\mathcal{M}_{X, alg}^{\times} = \mathbb{C}(X)^{\times}
\]
in the analytic topology.
\begin{lemma}Let $X$ be a smooth projective algebraic variety considered as a complex manifold.  Let $f$ be a non-zero rational function on $X$ such that the support of every irreducible divisor appearing in $div(f)$ is normal and locally irreducible.   Then the diagram 
\[
\xymatrix{\ar @{} [dr] |{}    & &\\
H^{p}(X,\mathbb{Z}) 
\ar[r] \ar[d]&  H^{p}(X,\mathbb{C}(X)^{\times})  \ar[d] \\
\bigoplus_{D \in Div(X)}H^{p}(D,\mathbb{Z}) \ar[r] & H^{p}(X,\mathcal{M}^{\times}/\mathcal{O}^{\times})  }
\]
commutes.  The top map is the cup product with $f \in \mathbb{C}(X)^{\times}$. 
If $div(f)  = \sum_{i} m_{i} D_{i}$ then the vertical map on the left hand side is 
\[N\otimes f \mapsto \sum_{i} m_{i}N|_{D_{i}}
\]
\end{lemma}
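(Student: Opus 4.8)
The plan is to exhibit the square of cohomology groups as $H^{p}(X,-)$ applied to a square of morphisms of sheaves of abelian groups on $X$, and then to verify commutativity of the sheaf square by a stalk‑wise computation.

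Write $div(f)=\sum_{k}m_{k}D_{k}$ with the $D_{k}$ irreducible and, by hypothesis, each $|D_{k}|$ normal and locally irreducible, so that $\widetilde{D_{k}}=D_{k}$; let $i_{k}\colon D_{k}\hookrightarrow X$ be the inclusions and $Y=\bigcup_{k}|D_{k}|$ the support of $div(f)$. Since the $|D_{k}|$ are the irreducible components of $Y$ and each is already normal, the normalization is $\widetilde{Y}=\bigsqcup_{k}D_{k}$, whence $\nu_{Y*}\mathbb{Z}_{\widetilde{Y}}=\bigoplus_{k}(i_{k})_{*}\mathbb{Z}_{D_{k}}$. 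Introduce four sheaf maps: $\phi_{f}\colon\mathbb{Z}_{X}\to\mathbb{C}(X)^{\times}$, $n\mapsto f^{n}$; the composite $q\colon\mathbb{C}(X)^{\times}\hookrightarrow\mathcal{M}^{\times}\to\mathcal{M}^{\times}/\mathcal{O}^{\times}$; the map $\rho\colon\mathbb{Z}_{X}\to\nu_{Y*}\mathbb{Z}_{\widetilde{Y}}=\bigoplus_{k}(i_{k})_{*}\mathbb{Z}_{D_{k}}$ whose $k$‑th component is $m_{k}$ times the adjunction unit $\mathbb{Z}_{X}\to(i_{k})_{*}\mathbb{Z}_{D_{k}}$; and the splitting $s\colon\nu_{Y*}\mathbb{Z}_{\widetilde{Y}}\to\mathcal{M}^{\times}/\mathcal{O}^{\times}$ of Lemma \ref{splitting}. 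Applying $H^{p}(X,-)$: the map $H^{p}(X,\phi_{f})$ is cup product with $f$ (by the \v{C}ech formula for the cup product with a class of degree zero); $H^{p}(X,q)$ is the right vertical map; since $i_{k}$ is a closed immersion one has $R^{q}(i_{k})_{*}=0$ for $q>0$, so $H^{p}(X,(i_{k})_{*}\mathbb{Z})=H^{p}(D_{k},\mathbb{Z})$ with the adjunction unit inducing the restriction map, whence $H^{p}(X,\rho)$ is $N\mapsto\sum_{k}m_{k}N|_{D_{k}}$, and $H^{p}(X,s)$ agrees with the bottom map $B$ on the summands indexed by the $D_{k}$. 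It therefore suffices to prove the identity of sheaf maps $q\circ\phi_{f}=s\circ\rho\colon\mathbb{Z}_{X}\to\mathcal{M}^{\times}/\mathcal{O}^{\times}$.

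To check this I would work on the open cover $\{U_{i}\}$ used in the splitting construction (refined, if necessary, so that every nonempty $D_{k}\cap U_{i}$ is connected and irreducible). On $U_{i}$ the summands of $\nu_{Y*}\mathbb{Z}_{\widetilde{Y}}$ are those $(j^{(\alpha)})_{*}\mathbb{Z}$ with $Y^{(\alpha)}_{i}=D_{k}\cap U_{i}$, there are holomorphic functions $f_{i}^{(k)}$ on $U_{i}$ with $div(f_{i}^{(k)})=D_{k}\cap U_{i}$, and $s|_{U_{i}}=s_{i}$ carries $(a_{k})_{k}$ to $\prod_{k}(f_{i}^{(k)})^{a_{k}}$ modulo $\mathcal{O}^{\times}$. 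A morphism of sheaves of abelian groups is determined by its values on sections over a basis of connected open sets, and a homomorphism out of $\mathbb{Z}$ by its value at $1$, so it is enough to compare $q\circ\phi_{f}$ and $s\circ\rho$ on the section $1$ over a connected $V\subseteq U_{i}$. The left side gives the class of $f|_{V}$ in $(\mathcal{M}^{\times}/\mathcal{O}^{\times})(V)$, the right side $s_{i}((m_{k})_{k})=\prod_{k}(f_{i}^{(k)})^{m_{k}}$ modulo $\mathcal{O}^{\times}$.

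It then remains to see these coincide, i.e.\ that $g:=f\cdot\prod_{k}(f_{i}^{(k)})^{-m_{k}}$ lies in $\mathcal{O}^{\times}(U_{i})$. But $div(f|_{U_{i}})=\sum_{k}m_{k}(D_{k}\cap U_{i})=div\!\left(\prod_{k}(f_{i}^{(k)})^{m_{k}}\right)$, so $g$ is a meromorphic function on the smooth complex manifold $U_{i}$ with $div(g)=0$; since $\mathcal{O}_{X,x}$ is a unique factorization domain this forces $g$ to be holomorphic and nowhere vanishing, i.e.\ a section of $\mathcal{O}^{\times}$. This would establish the sheaf identity, hence the commutativity of the square; the stated formula for the left vertical map is precisely the description of $H^{p}(X,\rho)$ obtained above. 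The one substantive point, which I would state with care, is this last one — that a meromorphic function with empty divisor on a smooth complex manifold is a unit of $\mathcal{O}$; the rest is the routine but slightly fussy identification of each arrow in the cohomology diagram with $H^{p}$ of an explicit map of sheaves (in particular matching cup product with $f$ against $H^{p}(X,\phi_{f})$ and the pullbacks $H^{p}(X,(i_{k})_{*}\mathbb{Z})=H^{p}(D_{k},\mathbb{Z})$).
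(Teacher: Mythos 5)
Your proposal is correct and follows essentially the same route as the paper: the paper's proof consists precisely of asserting the commutativity of the sheaf-level square with $n\mapsto f^{n}$ on top and the splitting $s$ of equation (\ref{eqn:Splitting}) on the bottom, then passing to cohomology. You supply the details the paper leaves implicit (the identification of each cohomology arrow and the local verification that $f\cdot\prod_{k}(f_{i}^{(k)})^{-m_{k}}$ is a unit of $\mathcal{O}$ via the UFD property of the local rings), and these details are sound.
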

{\bf Proof.}
This follows from the commutativity of the diagram below
\[
\xymatrix{\ar @{} [dr] |{}    & &\\
\mathbb{Z}_{X}
\ar[r] \ar[d]&  \mathbb{C}(X)^{\times} \ar[d] \\
\bigoplus_{D \in Supp(div(f))}\iota_{D*}\mathbb{Z}_{D} \ar[r] & \mathcal{M}^{\times}/\mathcal{O}^{\times}  }
\]
where the top arrow is $n \mapsto f^{n}$ and where the bottom arrow comes from the map $s$ defined in equation (\ref{eqn:Splitting}).

 \hfill $\Box$

Therefore given locally irreducible divisors $D_{i}$ and classes $n_{i} \in H^{p}(D_{i},\mathbb{Z})$.  The tensor product $\otimes_{i}B(n_{i})^{m_{i}}$ comes from a rational function $f$ if $div(f) = \sum_{i} m_{i} D_{i}$.  In the case where $X$ is non-compact, one can replace $\mathbb{C}(X)^{\times}$ with $\mathcal{M}^{\times}(X)$ in the above discussion.  Generic values of $B$ do not come from global functions.
\subsection{Topological Classes of Higher Gerbes coming from Divisors} \label{Top}
We can explain in purely topological terms the map 
\[H^{p}(\tilde{D},\mathbb{Z}) \to H^{p+1}(X,\mathbb{Z})
\]
which we have for any divisor $D$ of $X$.  It is a composition of three maps 
\[H^{p}(\tilde{D},\mathbb{Z}) \stackrel{B}\to H^{p}(X,\mathcal{M}^{\times}/\mathcal{O}^{\times}) \to H^{p+1}(X,\mathcal{O}^{\times}) \to H^{p+2}(X,\mathbb{Z})
\]
where $B$ was defined in (\ref{eqn:Bdef}) and the second and third maps are connecting maps from the long exact sequences associated to (\ref{eqn:HolMer}) and (\ref{eqx1}).  It is shown in \cite{Br1994} that the combined map is the pushforward (or transfer) map on integral cohomology.
\section{Producing Divisors With Interesting Topology}\label{oo1}
In this section we produce lower bounds on the Betti numbers of the normalizations of divisors.  Note that if we are interested in the non-triviality of (for instance)  $H^{1}(X,\mathcal{M}^{\times})$ there is a simple criterion for this in the case where 
\[H^2(X,\mathcal {O})=0.\]   In these cases, by Lemma \ref{splitting} and Remark \ref{x1} it is sufficient to
find an integral divisor $D \subset X$ whose normalization $\tilde{D}$ satisfies that the rank of $H^1(\tilde{D},\mathbb {Z})$ with is larger than 
the rank of
$H^3(X,\mathbb {Z}) \cong H^2(X,\mathcal {O}^{\times})$.  In section \ref{Local} we look at the corresponding problem at a boundary point, i.e. we take
$X$ as a proper open subset of another complex manifold $Y$, take $P\in \overline{X}\setminus X$. 
\begin{remark}\label{x1}
Fix an integer $i \geq 1$ and assume that $H^{i}(X,\mathcal{M}^{\times}/\mathcal{O}^{\times})$ is non-trivial.  If we know that $H^{i+1}(X,\mathcal{O}^{\times})$ is trivial then can conclude that $H^{i}(X,\mathcal{M}^{\times})$ is non-trivial.  More generally it is sufficient to find a subgroup of $H^i(X,\mathcal{M}^{\times}/\mathcal{O}^{\times})$ strictly larger that the abelian group $H^{i+1}(X,\mathcal{O}^{\times})$.
Conversely, if we know that the natural map $H^i(X,\mathcal {O}^{\times }) \to H^i(X,\mathcal {M}^{\times })$ is not surjective, then $H^i(X,\mathcal {M}^{\times }/\mathcal {O}^{\times })$ is non-trivial.  
\end{remark}
In sections \ref{oo1} and \ref{oo2} we will use the second part of Remark \ref{x1} to show that $H^i(X,\mathcal {M}^{\times }/\mathcal {O}^{\times })$ is non-trivial.  In those sections we will show that for many $X$ the abelian group $H^i(X,\mathcal {M}^{\times })$ is too large
to be a quotient of the abelian group $H^i(X,\mathcal {O}^{\times })$.
\begin{lemma}\label{u1}
Let $X$ be a smooth and connected quasi-projective complex surface. Fix an integer $r>0$. Then there exists a closed and smooth divisor $D\subset X$
such that $H^1(D,\mathbb {Z})$ has $\mathbb {Z}^r$ as a direct factor.
\end{lemma}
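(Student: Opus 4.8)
The plan is to reduce to the projective case by compactifying, and then to exhibit $D$ as a general high-degree hypersurface section of a smooth projective model, intersected with $X$. To carry this out, first fix a smooth projective model of $X$: embedding $X$ in a projective space and taking the closure gives a projective surface $\overline{X}_0$ (irreducible, since $X$ is connected and smooth), in whose smooth locus $X$ is a dense open subset; since over $\mathbb{C}$ a resolution of singularities may be taken to be an isomorphism over the smooth locus, one obtains a smooth projective surface $\overline{X}$ with an open immersion $X\hookrightarrow\overline{X}$. Write $Z=\overline{X}\setminus X$, a proper Zariski-closed subset, so $\dim Z\le 1$ and $Z$ has finitely many irreducible components.

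Next, fix a very ample divisor class $H$ on $\overline{X}$. For $m\gg 0$ a general member $C\in|mH|$ is a smooth irreducible curve by Bertini's theorem, and --- since for very ample $mH$ the members of $|mH|$ containing a fixed curve form a proper linear subsystem --- a general $C$ contains no one-dimensional component of $Z$, hence $C\not\subseteq Z$. By the adjunction formula such a $C$ has genus $g=1+\tfrac12\bigl(m^2H^2+m\,H\cdot K_{\overline{X}}\bigr)$, which tends to infinity with $m$ because $H^2>0$; so choose $m$ with $g\ge r$ and then choose $C$ in the dense open subset of $|mH|$ where all of the above holds at once. Put $D:=C\cap X$. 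Then $D$ is closed in $X$; it is nonempty and dense in $C$ because $C\not\subseteq Z$; it is smooth, being an open subset of the smooth curve $C$; and it is a reduced effective divisor on $X$, the zero scheme on $X$ of the restriction of a section defining $C$ (not identically zero since $\dim C<\dim X$).

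It remains to compute $H^1(D,\mathbb{Z})$. Since $C$ is irreducible and not contained in $Z$, the set $F=C\cap Z$ is finite, say $|F|=k\ge 0$, and $D$ is the genus-$g$ compact Riemann surface $C$ with the $k$ points of $F$ removed. A connected non-compact Riemann surface is homotopy equivalent to a wedge of circles, so an Euler-characteristic count gives $H^1(D,\mathbb{Z})\cong\mathbb{Z}^{2g+k-1}$ when $k\ge 1$ and $H^1(D,\mathbb{Z})\cong\mathbb{Z}^{2g}$ when $k=0$; in either case $H^1(D,\mathbb{Z})$ is free of rank at least $2g\ge 2r\ge r$, hence has $\mathbb{Z}^r$ as a direct factor, as required.

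The steps are individually standard; the one point deserving care is that a single $C$ can be chosen to satisfy all the genericity requirements simultaneously. This is fine because each requirement (smoothness, irreducibility, not being contained in $Z$) fails only on a proper closed subset of the projective space $|mH|$, and the genus is the fixed number computed above once $C$ is smooth and irreducible. I expect that, together with the elementary input that an open Riemann surface has free first integral cohomology of the stated rank, to be the only places a reader will want further detail.
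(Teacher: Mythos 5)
Your proof is correct and follows essentially the same route as the paper's: compactify $X$ to a smooth projective surface, take a general member $C$ of $|mH|$ for a very ample $H$ and $m\gg 0$, use adjunction to force the genus above $r$, and observe that $D=C\cap X$ is $C$ with finitely many points removed, so $H^1(D,\mathbb{Z})$ is free of rank at least $2g$. You supply somewhat more detail than the paper (the resolution step justifying the smooth projective compactification, the explicit genericity conditions on $C$, and the Euler-characteristic computation of $H^1$ of the punctured curve), but the argument is the same.
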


{\bf Proof.}

Fix any effective divisor $D\subset X$. Since $D$ is quasi-projective, all cohomology groups $H^i(D,\mathbb {Z})$ are finitely generated. The universal coefficient theorem shows that $H^1(D,\mathbb {Z})$ has no torsion. Hence
it is sufficient to prove the existence of an effective divisor $D$ such that $\mathbb {Z}^r \subseteq  H^1(D,\mathbb {Z})$. Since $X$ is a quasi-projective and smooth surface, there is an open embedding $X\hookrightarrow  Y$ with $Y$ a smooth and connected projective surface.
Fix a very ample divisor $H$ on $Y$. Set $c= H \cdot H$ (self-intersection) and $e= K_X \cdot H$.  Since $H$ is ample, $c$ is a positive integer. For every integer $k>0$ the divisor $kH$ is very ample. Hence a general $D_k\in \vert kH \vert$
is a smooth and connected curve. Call $g_k$ its genus. The adjunction formula gives $k^2c + ke = 2g_k-2$. Thus for $k\gg 0$ we have $2g_k \ge r$. Fix any such integer $k$
and set $D=D_k\cap X$. Since $D$ is the complement in $D_k$ of finitely many points and $H^1(D_k,\mathbb {Z}) \cong \mathbb {Z}^{2g_k}$, we are done.

 \hfill $\Box$

\begin{corollary}\label{u2}
Let $X$ be a smooth and connected quasi-projective surface such that $H^1(X,\mathcal {O}) =0$. Then for every integer $t>0$ the abelian group $H^{1}(X,\mathcal{M}^{\times})/H^{1}(X,\mathcal{O}^{\times})$
contains a subgroup isomorphic to $\mathbb {Z}^t$. 
\end{corollary}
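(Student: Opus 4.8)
The plan is to combine Lemma~\ref{u1} with the splitting of Lemma~\ref{splitting}, feed the output into the cohomology sequence~(\ref{eqn:decom}), and then exploit that $H^2(X,\mathcal{O}^\times)$ has finite rank so that all but finitely much of a large free subgroup survives into the left-hand term.

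Fix $t>0$ and put $\rho:=\mathrm{rank}_{\mathbb{Z}}\,H^2(X,\mathcal{O}^\times)$; I will verify $\rho<\infty$ at the end. Apply Lemma~\ref{u1} with $r:=t+\rho$ to obtain a closed smooth divisor $D\subseteq X$ such that $H^1(D,\mathbb{Z})$ has $\mathbb{Z}^r$ as a direct factor. Since $D$ is smooth it equals its own normalization, so Lemma~\ref{splitting} provides an injective homomorphism $B\colon H^1(D,\mathbb{Z})\hookrightarrow H^1(X,\mathcal{M}^\times/\mathcal{O}^\times)$; restricting $B$ to the chosen direct factor gives an embedded free subgroup $L\cong\mathbb{Z}^r$ of $H^1(X,\mathcal{M}^\times/\mathcal{O}^\times)$.

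Next, recall from the long exact cohomology sequence of~(\ref{eqn:HolMer}) (the case $p=1$ of~(\ref{eqn:decom})) that the subgroup $H^1(X,\mathcal{M}^\times)/H^1(X,\mathcal{O}^\times)$ of $H^1(X,\mathcal{M}^\times/\mathcal{O}^\times)$ is precisely the kernel of the connecting map $\delta\colon H^1(X,\mathcal{M}^\times/\mathcal{O}^\times)\to H^2(X,\mathcal{O}^\times)$. The image of $L$ under $\delta$ is a subgroup of $H^2(X,\mathcal{O}^\times)$ and so has rank at most $\rho$; hence $K:=L\cap\ker\delta$ has rank at least $r-\rho=t$. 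Being a subgroup of the free abelian group $L\cong\mathbb{Z}^r$, the group $K$ is itself free of rank $\ge t$, so $\mathbb{Z}^t\hookrightarrow K\subseteq\ker\delta=H^1(X,\mathcal{M}^\times)/H^1(X,\mathcal{O}^\times)$, which is the assertion.

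Finally, $\rho<\infty$: the exponential sequence $0\to\mathbb{Z}\to\mathcal{O}\to\mathcal{O}^\times\to 1$ gives an exact strand $H^2(X,\mathcal{O})\to H^2(X,\mathcal{O}^\times)\to H^3(X,\mathbb{Z})$, in which $H^3(X,\mathbb{Z})$ is finitely generated because the quasi-projective variety $X$ has the homotopy type of a finite CW-complex, while $H^2(X,\mathcal{O})=0$ for a non-compact complex surface (the vanishing already used in the discussion preceding Remark~\ref{x1}; the hypothesis $H^1(X,\mathcal{O})=0$ moreover forces $\mathrm{Pic}(X)=H^1(X,\mathcal{O}^\times)$ to be finitely generated). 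Thus $H^2(X,\mathcal{O}^\times)$ embeds into a finitely generated group and $\rho<\infty$. The only genuine difficulty is this control of the ``gerbe group'' $H^2(X,\mathcal{O}^\times)$; granting it, the argument is the splitting $B$ together with elementary linear algebra over $\mathbb{Z}$.
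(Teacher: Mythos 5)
Your reduction via the sequence (\ref{eqn:decom}) is set up correctly: $H^{1}(X,\mathcal{M}^{\times})/H^{1}(X,\mathcal{O}^{\times})$ is the kernel of the connecting map $\delta$ into $H^{2}(X,\mathcal{O}^{\times})$, and if $\rho=\mathrm{rank}\,H^{2}(X,\mathcal{O}^{\times})$ were finite then $L\cap\ker\delta$ would indeed be free of rank at least $r-\rho$. The genuine gap is in your final paragraph: the finiteness of $\rho$ is only justified for non-compact $X$, but ``quasi-projective'' includes the projective case, and the hypothesis $H^{1}(X,\mathcal{O})=0$ does not force $H^{2}(X,\mathcal{O})=0$. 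For a K3 surface one has $H^{1}(X,\mathcal{O})=0$, $H^{2}(X,\mathcal{O})\cong\mathbb{C}$ and $H^{3}(X,\mathbb{Z})=0$, so the exponential sequence gives $H^{2}(X,\mathcal{O}^{\times})\cong \mathbb{C}/\mathrm{im}\,H^{2}(X,\mathbb{Z})$, a group of uncountable rank; the choice $r=t+\rho$ is then meaningless, and the rank count yields nothing because $\delta$ could a priori be injective on $L$. So as written your argument establishes the statement only for $X$ non-compact or with $p_{g}(X)=0$. A related warning sign is that your proof never actually uses the stated hypothesis $H^{1}(X,\mathcal{O})=0$.

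This is also exactly where you diverge from the paper. The paper's proof uses $H^{1}(X,\mathcal{O})=0$ together with (\ref{eqx1}) to conclude that $H^{1}(X,\mathcal{O}^{\times})\subseteq H^{2}(X,\mathbb{Z})$ is finitely generated, and then invokes Lemma \ref{u1} and Remark \ref{x1}; it does not bound $H^{2}(X,\mathcal{O}^{\times})$ at all. Your version, which bounds $H^{2}(X,\mathcal{O}^{\times})$, is the one that actually matches the exact sequence (\ref{eqn:decom}) and the strategy sketched just before Remark \ref{x1} --- but there that strategy is explicitly conditioned on the additional assumption $H^{2}(X,\mathcal{O})=0$, which is not among the corollary's hypotheses. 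To repair your write-up, either add that assumption (or non-compactness, citing the vanishing of top-degree coherent cohomology on non-compact manifolds), or find an argument for compact surfaces with $p_{g}>0$ that does not rely on a rank comparison in $H^{2}(X,\mathcal{O}^{\times})$.
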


{\bf Proof.}

Look at the exponential sequence of sheaves on $X$ given by
\begin{equation}\label{eqx1}
0 \to \mathbb {Z} \to \mathcal {O} \to \mathcal {O}^\times \to 1.
\end{equation}
From (\ref{eqx1}) we get $H^1(X,\mathcal {O}^{\times })\subseteq H^2(X,\mathbb {Z})$. Thus $H^1(X,\mathcal {O}^{\times })$
is a finitely generated abelian group. Apply
Lemma \ref{u1} and Remark \ref{x1}. 







 \hfill $\Box$

Let $X$ be a smooth and connected projective surface, i.e. take the set-up of Corollary \ref{u2} with $X$ is compact. The group $H^2(X,\mathcal {O})$ is a finite-dimensional
$\mathbb {C}$-vector space and its dimension is usually denoted with $p_g(X)$. We have $p_g(X)=0$ if $X$ is a ruled surface and in particular if $X$ is a rational surface. There are
other scattered examples with $p_g(X)=0$, e.g. the Enriques surfaces or some surfaces of general type with $q(X)=h^{1}(X,\mathcal{O})=0$ (for surfaces of general type
$p_g(X)=0$ implies $q(X)=0$, because $\chi (\mathcal {O}_X)\ge 0$ for every surface $X$ of general type). The rational surfaces and the blowing-ups of Enriques surfaces
satisfies $H^1(X,\mathcal {O}) =H^2(X,\mathcal {O}) =0$.

\begin{thm}\label{n1}
Let $X$ be an $n$-dimensional connected projective manifold, $n \ge 2$. Then there exists an integral closed
divisor $D\subset X$ such that the integral cohomology of its normalization $\tilde{D}$ contains (up to torsion) a subring given by the cohomology of any smooth projective variety $A$ of dimension $n-1$.  To be more specific
\[H^{*}(A,\mathbb{Z})/\mbox{Tors}(H^{*}(A,\mathbb {Z})) \subset H^{*}(\tilde{D},\mathbb{Z})/\mbox{Tors}(H^{*}(\tilde{D},\mathbb {Z})).\]  By taking $A$ to be a product of a curve with itself $n-1$ times we get that for every $t>0$ one can find a divisor $D$ such that 
\[\mathbb{Z}^{t} \subset H^{i}(\tilde{D},\mathbb{Z})\]
for all integers $i$ such that $1 \leq i \leq 2n-3.$
\end{thm}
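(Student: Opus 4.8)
The plan is to reduce the statement to the following claim: for any smooth projective variety $A$ of dimension $n-1$ there is an integral closed divisor $D\subset X$ equipped with a finite surjective morphism $h\colon \widetilde D\to A$. Granting this, $h^{\ast}\colon H^{\ast}(A,\mathbb Z)\to H^{\ast}(\widetilde D,\mathbb Z)$ is a homomorphism of graded rings which is injective modulo torsion. Indeed, for any finite surjective morphism $h$ of complex varieties of degree $\delta$ the adjunction unit $\mathbb Q_{A}\to h_{\ast}\mathbb Q_{\widetilde D}$ is split by $\tfrac1\delta$ times the trace map: at a point $a\in A$ the latter sends a section $(c_{w})_{w\in h^{-1}(a)}$ to $\sum_{w}\mu_{w}c_{w}$, where $\mu_{w}$ is the local mapping degree of $h$ at $w$ and $\sum_{w\in h^{-1}(a)}\mu_{w}=\delta$. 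Hence $H^{\ast}(A,\mathbb Q)$ is a direct summand of $H^{\ast}(\widetilde D,\mathbb Q)$, so $\ker h^{\ast}$ is torsion and $h^{\ast}$ induces an injection $H^{\ast}(A,\mathbb Z)/\mathrm{Tors}\hookrightarrow H^{\ast}(\widetilde D,\mathbb Z)/\mathrm{Tors}$ of graded rings; note that $\widetilde D$ need not be smooth for this.

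To build $D$ we pull back a suitably singular hypersurface. Embed $A$ by a sufficiently positive very ample linear system into $\mathbb P^{M}$ with $M>n$, and consider a general linear projection $\mathbb P^{M}\dashrightarrow\mathbb P^{n}$ from a centre $\Lambda\cong\mathbb P^{M-n-1}$. For general $\Lambda$ one has $\Lambda\cap A=\emptyset$, the restriction $A\to\mathbb P^{n}$ is a finite morphism, and the locus of pairs $(a,a')$ with $a\ne a'$ whose joining line meets $\Lambda$ has dimension at most $2(n-1)-n<n-1$, so the morphism is birational onto its image $Z_{0}\subset\mathbb P^{n}$; these are the classical facts about finiteness and generic injectivity of general linear projections (to a target of dimension at least, respectively strictly greater than, the dimension of the variety). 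Thus $Z_{0}$ is a hypersurface whose normalization is $A$. Next embed $X$ by a very ample system and let $\pi\colon X\to\mathbb P^{n}$ be a general linear projection; this is a finite surjective morphism, and it is flat since $X$ is smooth and $\mathbb P^{n}$ is regular. Hence $\pi^{-1}(Z_{0})=\pi^{\ast}Z_{0}$ is an effective Cartier divisor, and is of pure dimension $n-1$; let $D$ be an irreducible component of it dominating $Z_{0}$, with its reduced structure. Then $D$ is an integral closed divisor of $X$ and $D\to Z_{0}$ is finite surjective, so $\widetilde D\to D\to Z_{0}$ is finite surjective; since $\widetilde D$ is normal this factors through the normalization $A\to Z_{0}$, and the resulting $h\colon\widetilde D\to A$ is finite (proper and quasi-finite) and surjective, as required.

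For the second assertion take $A=C^{n-1}$, the $(n-1)$-fold self-product of a smooth projective curve $C$ of genus $g$. By the Künneth formula $H^{\ast}(C^{n-1},\mathbb Z)/\mathrm{Tors}\cong\bigl(H^{\ast}(C,\mathbb Z)\bigr)^{\otimes(n-1)}$, and for each $i$ with $1\le i\le 2n-3$ the degree-$i$ part has a direct summand containing a tensor factor $H^{1}(C,\mathbb Z)\cong\mathbb Z^{2g}$: one takes $H^{1}(C)\otimes H^{2}(C)^{\otimes(i-1)/2}\otimes H^{0}(C)^{\otimes(n-2-(i-1)/2)}$ for $i$ odd and $H^{1}(C)^{\otimes2}\otimes H^{2}(C)^{\otimes(m-1)}\otimes H^{0}(C)^{\otimes(n-m-2)}$ for $i=2m$ even, both valid precisely in the range $1\le i\le 2n-3$. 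Hence $\operatorname{rank}H^{i}(C^{n-1},\mathbb Z)\ge 2g$ for all such $i$. Taking $g\ge t/2$ and the divisor $D$ produced above for this particular $A$, the ring inclusion of the first paragraph yields $\mathbb Z^{t}\subset H^{i}(\widetilde D,\mathbb Z)$ for every $1\le i\le 2n-3$; for the first assertion one simply keeps $A$ arbitrary.

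The crux is that the cohomology transfer genuinely requires an honest finite surjective morphism $\widetilde D\to A$, not merely a dominant generically finite correspondence $\widetilde D\leftarrow V\to A$: for the latter both cohomologies only map into $H^{\ast}(V)$ and one obtains no map $H^{\ast}(A)\to H^{\ast}(\widetilde D)$, and a general complete intersection in $X\times A$ (the most naive construction) produces exactly such a correspondence. This is why the argument routes through a hypersurface $Z_{0}$ with $\widetilde{Z_{0}}=A$ pulled back along a finite $\pi$, so that the normalization map of $\pi^{-1}(Z_{0})$ factors through $A\to Z_{0}$. The points still requiring care are the classical genericity statements for linear projections used to produce $\pi$ and $Z_{0}$, the flatness and purity used to see that a component of $\pi^{-1}(Z_{0})$ is a genuine integral divisor of $X$, and the equality $\sum_{w}\mu_{w}=\delta$ of local and global degrees underlying the trace splitting when $\widetilde D$ is singular.
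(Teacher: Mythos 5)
Your proof follows essentially the same route as the paper's: project $A$ generically to a hypersurface $Z_0\subset\mathbb{P}^n$ whose normalization is $A$, pull it back along a finite surjection $X\to\mathbb{P}^n$, use the universal property of normalization to produce a finite surjective $h\colon\tilde{D}\to A$ from a component $D$ of the preimage, and conclude with a transfer argument plus the K\"unneth computation for $A=C^{n-1}$. The only (harmless) divergence is that you split $\mathbb{Q}_A\to h_*\mathbb{Q}_{\tilde{D}}$ by the sheaf-level trace directly on the possibly singular $\tilde{D}$, whereas the paper first resolves the singularities of $\tilde{D}$ and applies Dold's topological transfer to the composite map of manifolds; both give injectivity of $h^*$ modulo torsion.
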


{\bf Proof.}

If $n=2$, then the result is true by Lemma \ref{u1} even if $X$ is only assumed to be quasi-projective (and with $D = \tilde{D}$ smooth). Assume
$n\ge 3$. Let $A$ be an $(n-1)$-dimensional smooth projective variety. Fix any embedding \[j: A\hookrightarrow  \mathbb {P}^r,\] where $r \ge 2n-1$ and take a general
projection of $j(A)$ into $\mathbb {P}^n$ from a general $(n-r-1)$, dimensional linear subspace $W\subset \mathbb {P}^r$. Since $\dim (A)+\dim (W) < r$, for general
$W$ we have $W\cap j(A) =\emptyset$. Thus the linear projection induces a morphism
\[u: A \to \mathbb {P}^n.\] Since $W\cap j(A)=\emptyset$, the morphism $u$ is finite. Since $\dim (A)<n$ and $W$ is general, $u$ is birational onto its image (\cite{h}, Ex. I.4.9, page 31).  Hence because $A$ is normal (in fact smooth) and $u$ is finite and birational onto its image, we can conclude that \[u: A \to u(A)\] is the normalization
map (\cite{Na1966}, Definition 2 at page 114).
Fix a finite and dominant morphism \[f: X \to \mathbb {P}^n.\]  Let $D\subset X$ be any irreducible $(n-1)$-dimensional
component
of $f^{-1}(u(A))$. Thus $f(D) = u(A)$. Let 
\[\mu : \tilde{D} \to D\] be the normalization map. The morphism \[f\circ \mu : \tilde{D} \to u(A)\] is a finite
dominant morphism between integral varieties. Since $A$ is the normalization of $u(A)$ the universal property of normal varieties (\cite{h}, Ex. I.3.17, or \cite{s}, Theorem 5 at p. 115, or \cite{f}, Proposition at p. 121)
implies the existence of a morphism \[g: \tilde{D} \to A\] such that \[f\circ \mu = u\circ g.\]  

The map $g$ is finite and has some topological degree $m>0$.  Let \[r:\tilde{D}' \to \tilde{D}\] be a resolution of singularities of $\tilde{D}$ which is known to exist by Hironaka's Theorem \cite{Hiro}.  The morphism $r$ has degree $1$ and $\tilde{D}'$ is a smooth projective manifold of dimension $n-1$.   Now
we use the pushforward (or transfer) map on cohomology  (see for example Definition VIII.10.5 on page 310 of \cite{Dold}). For any class $c \in H^{p}(A,\mathbb{Z})$ the projection formula \cite{Dold} gives 
\[{(g \circ r)}_{*} {(g \circ r)}^{*} c = c \cup {(g \circ r)}_{*} 1 = mc.
\]
Therefore the kernel of $(g \circ r)^{*}$ is contained in the kernel of multiplication by $m$.  Hence the ring homomorphism $r^{*} \circ g^{*} =(g\circ r)^{*}$ is injective, up to torsion.  Therefore $g^{*}$ in an injective homomorphism of rings, up to torsion.  If one choses $A = C^{n-1}$ for $C$ a curve of genus $g \geq \max \{2,t\}$ then $H^{i}(A,\mathbb{Z})$ has no torsion and from the proof we actually see that $g^{*}$ is injective.  The K\"{u}nneth formula therefore tells us that $\mathbb{Z}^{t}$ is contained in $H^{i}(\tilde{D},\mathbb{Z})$ 
for $1 \leq i \leq 2n-3$. 

 \hfill $\Box$

As in Corollary \ref{u2} from Theorem \ref{n1} (case $i\le 2n-3$) and Proposition \ref{ee} (case $i=2n-2$) we get the following result.

\begin{corollary}\label{n2}
Fix an integer $t>0$. Let $X$ be an $n$-dimensional connected projective manifold, $n \ge 2$ such that $H^{i}(X,\mathcal{O}) = 0$.  Then $H^{i}(X,\mathcal{M}^{\times})/H^{i}(X,\mathcal{O}^{\times})$
contains a subgroup isomorphic to $\mathbb {Z}^{t}$.
\end{corollary}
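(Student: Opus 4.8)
The plan is to combine the previously established results in essentially the way the paragraph before the statement already indicates, so the proof is short. First I would handle the generic case $2 \le i \le 2n-2$ by splitting it at $i = 2n-2$. For $i \le 2n-3$, apply Theorem \ref{n1} to produce an integral closed divisor $D \subset X$ whose normalization $\tilde D$ satisfies $\mathbb{Z}^s \subset H^i(\tilde D, \mathbb{Z})$ for any prescribed $s$; for the remaining case $i = 2n-2$, apply Proposition \ref{ee} instead, which directly gives a $\mathbb{Z}^s$ inside $H^{2n-2}(X, \mathcal{M}^\times/\mathcal{O}^\times)$. In the range $i \le 2n-3$, feed $D$ into Lemma \ref{splitting} to get $\mathbb{Z}^s \subset H^i(\tilde D, \mathbb{Z}) \subset H^i(X, \mathcal{M}^\times/\mathcal{O}^\times)$. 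So in all cases $1 \le i \le 2n-2$ we obtain an arbitrarily large free subgroup of $H^i(X, \mathcal{M}^\times/\mathcal{O}^\times)$.

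Next I would pass from $H^i(X, \mathcal{M}^\times/\mathcal{O}^\times)$ to $H^i(X, \mathcal{M}^\times)/H^i(X, \mathcal{O}^\times)$ using the exact sequence \eqref{eqn:decom}, whose left-hand term is exactly $H^i(X, \mathcal{M}^\times)/H^i(X, \mathcal{O}^\times)$, sitting inside $H^i(X, \mathcal{M}^\times/\mathcal{O}^\times)$ with quotient $\ker[H^{i+1}(X, \mathcal{O}^\times) \to H^i(X, \mathcal{M}^\times)]$, a subgroup of $H^{i+1}(X, \mathcal{O}^\times)$. Since $X$ is projective and $H^{i+1}(X, \mathcal{O}) = 0$ — wait, the hypothesis only gives $H^i(X, \mathcal{O}) = 0$; here I must be a little careful. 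From the exponential sequence \eqref{eqx1}, $H^{i+1}(X, \mathcal{O}^\times)$ fits between $H^{i+1}(X, \mathcal{O})$ and $H^{i+2}(X, \mathbb{Z})$. The group $H^{i+2}(X, \mathbb{Z})$ is finitely generated because $X$ is a compact manifold, and $H^{i+1}(X, \mathcal{O})$ is a finite-dimensional $\mathbb{C}$-vector space, hence has no nonzero finite quotients relevant here, but it need not vanish. The cleaner statement to use is simply that $H^{i+1}(X, \mathcal{O}^\times)$ has \emph{finite rank}: indeed it surjects onto a subgroup of the finitely generated group $H^{i+2}(X, \mathbb{Z})$ with kernel a quotient of the finite-dimensional $\mathbb{C}$-vector space $H^{i+1}(X, \mathcal{O})$, and a divisible group like a $\mathbb{C}$-vector space contributes no free rank, so $\mathrm{rk}\, H^{i+1}(X, \mathcal{O}^\times) \le \mathrm{rk}\, H^{i+2}(X, \mathbb{Z}) < \infty$. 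Set $b$ to be this finite rank (or, if one prefers to mimic Corollary \ref{u2} verbatim, one can simply invoke the first part of Remark \ref{x1}).

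Now I would choose $s = t + b$ and run the comparison: we have $\mathbb{Z}^s \hookrightarrow H^i(X, \mathcal{M}^\times/\mathcal{O}^\times)$, and composing with the projection to the quotient $\ker[H^{i+1}(X, \mathcal{O}^\times) \to H^i(X, \mathcal{M}^\times)]$, whose rank is at most $b$, the image of $\mathbb{Z}^s$ has rank at most $b$; therefore the kernel of $\mathbb{Z}^s \to (\text{that quotient})$ has rank at least $s - b = t$, and this kernel lands inside $H^i(X, \mathcal{M}^\times)/H^i(X, \mathcal{O}^\times)$ by exactness of \eqref{eqn:decom}. A finite-rank torsion-free quotient argument (or just taking the free part) then exhibits a subgroup isomorphic to $\mathbb{Z}^t$ inside $H^i(X, \mathcal{M}^\times)/H^i(X, \mathcal{O}^\times)$.

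I do not anticipate a genuine obstacle: everything reduces to bookkeeping with the exact sequence \eqref{eqn:decom} and the two input theorems. The one point that requires a moment's care — and the only place the argument could go wrong if handled sloppily — is the bound on $H^{i+1}(X, \mathcal{O}^\times)$: one must note it has finite rank (rather than claim it vanishes, which would need $H^{i+1}(X,\mathcal{O})=0$ as well) and then use that a $\mathbb{Z}^s$ with $s$ large cannot collapse into it, exactly as in the proof of Corollary \ref{u2}. The split into the two cases $i \le 2n-3$ and $i = 2n-2$ is forced because Theorem \ref{n1} only controls cohomology up to degree $2n-3$, while Proposition \ref{ee} supplies the top degree $2n-2$; no further cases arise since $H^i(X, \mathcal{O}^\times)$ and the relevant sheaf cohomology vanish or are uninteresting outside $1 \le i \le 2n-2$ for an $n$-dimensional $X$, and the statement only claims the range where $H^i(X,\mathcal{O})=0$ is assumed.
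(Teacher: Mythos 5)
Your overall strategy is the same as the paper's: inflate the target rank to $t'=t+(\text{rank bound})$, invoke Theorem~\ref{n1} for $i\le 2n-3$ and Proposition~\ref{ee} for $i=2n-2$ to plant $\mathbb{Z}^{t'}$ inside $H^i(X,\mathcal{M}^\times/\mathcal{O}^\times)$, and then compare against the obstruction term of \eqref{eqn:decom}. The paper's own proof is exactly this, except that it bounds the rank of $H^{i}(X,\mathcal{O}^\times)$, which under the hypothesis $H^i(X,\mathcal{O})=0$ injects into the finitely generated group $H^{i+1}(X,\mathbb{Z})$, and sets $t'=\rho+t$ with $\rho$ that rank. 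You instead bound $H^{i+1}(X,\mathcal{O}^\times)$, and you are right that this is the group that literally appears as the cokernel in \eqref{eqn:decom}; that is a sharper reading of the exact sequence than the paper's one-line proof records.

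However, your justification of the bound is where the argument genuinely breaks. You claim that because $H^{i+1}(X,\mathcal{O})$ is a divisible group (a finite-dimensional $\mathbb{C}$-vector space), it ``contributes no free rank,'' so that $\mathrm{rk}\,H^{i+1}(X,\mathcal{O}^\times)\le \mathrm{rk}\,H^{i+2}(X,\mathbb{Z})<\infty$. This is false: divisible groups have no nonzero finitely generated \emph{quotients}, but they can contain free abelian \emph{subgroups} of arbitrarily large (even uncountable) rank --- $\mathbb{C}$ contains $\mathbb{Z}^m$ for every $m$, and so does any quotient of $\mathbb{C}^g$ by a countable subgroup. Concretely, for a K3 surface ($i=1$, so the hypothesis $H^1(X,\mathcal{O})=0$ holds) the exponential sequence \eqref{eqx1} exhibits $H^2(X,\mathcal{O}^\times)$ as an extension involving $H^2(X,\mathcal{O})/\mathrm{im}\,H^2(X,\mathbb{Z})\cong\mathbb{C}/(\text{countable})$, which has infinite rank; your asserted finiteness fails there, and with it the pigeonhole step $s-b=t$. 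The argument would close if one additionally knew $H^{i+1}(X,\mathcal{O})=0$, or if one showed that the image of your $\mathbb{Z}^{s}$ in $\ker[H^{i+1}(X,\mathcal{O}^\times)\to H^{i+1}(X,\mathcal{M}^\times)]$ is detected by its further image in $H^{i+2}(X,\mathbb{Z})$ (the transfer map of subsection~\ref{Top}), but neither is supplied. So, as written, the proposal has a genuine gap at exactly the step you yourself flagged as the delicate one.
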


\begin{proof}
Since $H^i(X,\mathcal {O})=0$, the exponential sequence shows that $H^i(X,\mathcal {O}^{\times })$ is a subgroup of $H^{i+1}
(X,\mathbb {Z})$. Hence $H^i(X,\mathcal {O}^{\times })$ is finitely generated. Call $\rho$ its rank. Apply Theorem \ref{n1} (case $i\le 2n-3$) and Proposition \ref{ee} (case $i=2n-2$)
with respect to the integer $t'= \rho +t$.
\end{proof}

\section{Consequences and Examples}
\subsection{The Question of Chen, Kerr, and Lewis}\label{Question}
We can now answer a question posed by Chen, Kerr, and Lewis in \cite{ChKeLe10}.  They asked if on a smooth projective complex algebraic variety $X$, if the sheaves $\mathcal{M}^{\times}/\mathbb{C}(X)^{\times}$ are acyclic.   The answer is negative.  For convenience we show that the answer is negative in a two specific examples although it seems to be a fairly general phenomenon.  We will use the short exact sequence 
\begin{equation} \label{eqn:AlgMer}
1 \to \mathbb{C}(X)^{\times} \to \mathcal{M}^{\times} \to \mathcal{M}^{\times}/ \mathbb{C}(X)^{\times} \to 1
\end{equation}
 For $X = \mathbb{P}^{2}$ over the complex numbers, $H^{1}(X, \mathcal{M}^{\times}/\mathbb{C}(X)^{\times})$ and $H^{2}(X, \mathcal{M}^{\times}/\mathbb{C}(X)^{\times})$ cannot vanish.  
The first example is easier.  Indeed,  $H^{2}(\mathbb{P}^{2},\mathcal{O}^{\times})$ is trivial as can be seen from the exponential sequence (\ref{eqx1}).  Therefore a quotient group of $H^{1}(\mathbb{P}^{2},\mathcal{M}^{\times})$ is given by $H^{1}(\mathbb{P}^{2},\mathcal{M}^{\times}/\mathcal{O}^{\times})$.  This is a huge group as shown in Lemma \ref{u1}.  It contains for instance the first integral cohomology of an elliptic curve sitting inside $\mathbb{P}^{2}$.  Therefore $H^{1}(\mathbb{P}^{2},\mathcal{M}^{\times})$ cannot be trivial.  Of course, by change of coefficient groups and the fact that $H^{1}(\mathbb{P}^{2},\mathbb{Z})= 0$ we have that $H^{1}(\mathbb{P}^{2},\mathbb{C}(\mathbb{P}^{2})^{\times})$ is trivial. Therefore the long exact sequence associated to the exact sequence (\ref{eqn:AlgMer}) shows that $H^{1}(\mathbb{P}^{2}, \mathcal{M}^{\times}/\mathbb{C}(\mathbb{P}^{2})^{\times})$ contains $H^{1}(\mathbb{P}^{2},\mathcal{M}^{\times})$ and so cannot be trivial.

Now we prove the non-triviality of $H^{2}(\mathbb{P}^{2},\mathcal{M}^{\times}/\mathbb{C}(\mathbb{P}^{2})^{\times})$. Look at the long exact sequence associated to the exponential sequence (\ref{eqx1}). It shows that \[H^{3}(\mathbb{P}^{2},\mathcal{O}^{\times}) \cong H^{4}(\mathbb{P}^{2},\mathbb{Z}) = \mathbb{Z}[\mathbb{P}^2].\]  Therefore the long exact sequence associated to  (\ref{eqn:AlgMer}) reads 
\[\{1\} = H^{2}(\mathbb{P}^{2},\mathcal{O}^{\times}) \to H^{2}(\mathbb{P}^{2},\mathcal{M}^{\times}) \to H^{2}(\mathbb{P}^{2},\mathcal{M}^{\times}/\mathcal{O}^{\times}) \to H^{4}(\mathbb{P}^{2},\mathbb{Z}) =\mathbb{Z}[\mathbb{P}^2].
\]On the other hand by change of coefficient groups  $H^{2}(\mathbb{P}^{2},\mathbb{C}(\mathbb{P}^{2})^{\times}) = \mathbb{C}(\mathbb{P}^{2})^{\times}$.  Let $C_{1}$ and $C_{2}$ be two smooth curves of degrees $d_{1}$ and $d_{2}$ in $\mathbb{P}^{2}$ such that $d_{1} \neq d_{2}$.  They have fundamental classes $[C_{1}] \in H^{2}(C_{1},\mathbb{Z})$ and $[C_{2}] \in H^{2}(C_{2},\mathbb{Z})$.  The pushforward of either class to $H^{4}(\mathbb{P}^{2},\mathbb{Z})$ is the generator, $[\mathbb{P}^{2}]$ because this corresponds via Poincare Duality to pushing forward a point in the zeroth homology group.  Therefore using subsection \ref{Top} and the map $B$ defined in equation  (\ref{eqn:Bdef}) we can consider the class \[B([C_{1}]) \otimes B([C_{2}])^{-1} \in H^{2}(\mathbb{P}^{2},\mathcal{M}^{\times}/\mathcal{O}^{\times}).\]   This maps to the trivial class in  $H^{4}(\mathbb{P}^{2},\mathbb{Z})$  and therefore comes from some class  \[\widetilde{B([C_{1}]) \otimes B([C_{2}])^{-1}} \in H^{2}(\mathbb{P}^{2},\mathcal{M}^{\times}).\]  Suppose that the natural map \[\mathbb{C}(\mathbb{P}^{2})^{\times} = H^{2}(\mathbb{P}^{2},\mathbb{C}(\mathbb{P}^{2})^{\times}) \to H^{2}(\mathbb{P}^{2},\mathcal{M}^{\times})\] were surjective and consider a lift to $H^{2}(\mathbb{P}^{2},\mathbb{C}(\mathbb{P}^{2})^{\times})$ of the class $\widetilde{B([C_{1}]) \otimes B([C_{2}])^{-1}}$.  We can write this lift as the cup product of an element $N \in H^{2}(\mathbb{P}^{2},\mathbb{Z}) = \mathbb{Z}$ and a function $f \in \mathbb{C}(\mathbb{P}^{2})^{\times}$.  We denote this cup product as the element $f^{N} \in H^{2}(\mathbb{P}^{2},\mathbb{C}(\mathbb{P}^{2})^{\times})$.  This immediately means that we must have integers $e_{1}$ and $e_{2}$ such that 
\[div(f) = e_{1} C_{1} + e_{2} C_{2}
\]
where $e_{1} d_{1} + e_{2} d_{2} = 0$.  Then the divisor of $f^{N}$ and the divisor of $B([C_{1}]) \otimes B([C_{2}])^{-1}$ must agree as elements of $\bigoplus_{D \in IDiv(\mathbb{P}^{2})} H^{2}(D,\mathbb{Z})$.    Thus the divisor of $f^{N}$ is \[
(Ne_{1}d_{1},Ne_{2}d_{2}) \in H^{2}(C_{1},\mathbb{Z}) \oplus H^{2}(C_{2},\mathbb{Z})
\] because the natural restriction maps $H^{2}(\mathbb{P}^{2},\mathbb{Z}) \to H^{2}(C_{i},\mathbb{Z})$ are multiplication by $d_{i}$.  However the divisor of $B([C_{1}]) \otimes B([C_{2}])^{-1}$ is just \[(1,-1) \in H^{2}(C_{1},\mathbb{Z}) \oplus H^{2}(C_{2},\mathbb{Z})
\] 
therefore the agreement of these divisors implies $d_{1}, d_{2} =  1$ and so the degrees need to agree which contradicts our assumption.

\subsection{The Examples of $\mathbb{C}^{2}$ and $\mathbb{P}^{2}$}\label{c2}
\begin{ex}
As we will now see, even $\mathbb{C}^{2}$ admits non-trivial meromorphic line bundles!   In more generality we observe that any complex manifold $X$ admits non-trivial meromorphic line bundles as long as $H^{2}(X,\mathcal{O}^{\times})$ is trivial and $X$ has an irreducible  divisor $D$ with $H^{1}(D, \mathbb{Z}) \neq 0$.  Another simple example of this would be $\mathbb{P}^{2}_{\mathbb{C}}$ where the divisor is a smooth curve of genus greater than or equal to one, or any curve with non-zero first Betti number.    Let $\mathbb{P}^{1}_{\mathbb{C}} \cong Q \subset \mathbb{P}^{2}_{\mathbb{C}}$ be a smooth quadratic curve, meeting some divisor at infinity at two points.  Let $V = \mathbb{C}^{2}$ be the compliment of the divisor at infinity and $D = Q \cap V \cong \mathbb{C}^{\times}$.  Let $n \in H^{1}(V,\mathbb{Z}_{D}) = H^{1}(D, \mathbb{Z})$ be a non-trivial element.  Then by the above argument, we can consider $n$ as a non-trivial element of $H^{1}(\mathbb{C}^{2}, \mathcal{M}^{\times}/\mathcal{O}^{\times})$.  Because all gerbes are trivial in this situation, we see that we can lift $n$ to a non-trivial element of $H^{1}(\mathbb{C}^{2}, \mathcal{M}^{\times})$.  In fact for any $X$ as above
\[\bigoplus_{D \in IDiv(X)} H^{1}(D, \mathbb{Z})  \subset H^{1}(X,\mathcal{M}^{\times})/H^{1}(X,\mathcal{O}^{\times}).
\]
\end{ex}
The cases of $\mathbb {C}^2$ and $\mathbb {P}^2$ also follows from Remark \ref{x1} and Corollary \ref{u2}.

\subsection{Complex Tori}
A {\it generic} complex torus $X$ has no global divisors and so sequence (\ref{eqn:HolMer}) shows that we have $Pic^{0}(X) \subset H^{1}(X,\mathcal{M}^{\times})$.    On the other hand if a complex torus is algebraic, then all holomorphic line bundles are meromorphically trivializable.  Holomorphic gerbes on complex tori were studied extensively in \cite{Be2009}.  Even when the Neron-Severi group of a complex torus is zero, it was shown that this complex torus can still have non-trivial holomorphic gerbes hence the correspondence we have studied in this paper producing classes in $H^{2}(X,\mathcal{O}^{\times})$ from divisors cannot be surjective in general.

\section{Local Analysis}\label{Local}

\subsection{The cohomology of $\mathcal{M}^{\times}$ near boundary points} \label{oo2}
Let $A[X]$ be the group defined by  \[A[X] = \text{im}[H^1(X,\mathcal {M}^\times ) \to H^1(X,M^{\times}/\mathcal{O}^{\times})].\] Let $X\subsetneqq Y$ be a connected open subset of a smooth and connected complex surface $Y$. Fix $P\in \overline{X}\setminus X$. Let $\mathcal {V}_P$ denote
the filter of all open neighborhoods of $P\in Y$. Fix any abelian sheaf $F$ on $X$. For each integer $i\ge 0$ let $H^i(X,F)_P$ be defined by 
\[H^{i}(X,F)_P =\lim _{U\in \mathcal {V}_P} H^i(X\cap U,F).\] Here we are interested in the case $i=1$ and either $F = \mathcal {M}^\times $ or $F = \mathcal{M}^{\times}/\mathcal{O}^{\times}$. We also want
to study the image of the natural map 
\[A_{X,P}=\text{im}[H^1(X,\mathcal {M}^\times )_P \to H^1(X,\mathcal{M}^{\times}/\mathcal{O}^{\times})_P].\] We consider
a class of boundary points $P$ for which $A_{X,P}$ has $\mathbb {Z}$ as a direct factor and in particular it is non-zero. We say that $X$ is {\it pseudoconcave}
at $P$ if there is $U\in \mathcal {V}_P$ and a closed analytic subset $D$ of $U$ such that $D$ is biholomorphic to a disk, $P\in D$ and $D\cap X = D\setminus \{P\}$.
This notion is inspired to the usual notion of pseudoconcave manifold (\cite{a1}, \cite{a2}, 2.3).
\begin{proposition}\label{i1}
Assume that $X$ is pseudoconcave at $P$. Then $A_{X,P}$ has $\mathbb {Z}$ as a direct factor.
\end{proposition}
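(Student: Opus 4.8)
The plan is to construct, near $P$, the analogue of the map $B$ from Lemma~\ref{splitting}, but working with the local cohomology groups $H^1(X,-)_P$, and to exhibit a $\mathbb{Z}$ summand coming from the topology of the punctured disk $D\cap X = D\setminus\{P\}$. First I would fix the neighborhood $U\in\mathcal{V}_P$ and the embedded disk $D\subset U$ witnessing pseudoconcavity, shrinking $U$ so that $D$ is closed in $U$, is cut out on $U$ (after possibly further shrinking) by a single holomorphic function $f$ with $\operatorname{div}(f)=D$, and so that $U$ is biholomorphic to a polydisk. For any smaller $V\in\mathcal{V}_P$ with $V\subset U$, the trace $D\cap V$ is again a disk containing $P$, so $D\cap V\cap X = (D\cap V)\setminus\{P\}$ is homotopy equivalent to a circle and $H^1((D\cap V)\cap X,\mathbb{Z})\cong\mathbb{Z}$. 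Applying the splitting construction to the divisor $D\cap V$ on $V\cap X$ gives, by Lemma~\ref{splitting}, an inclusion $H^1((D\cap V)\cap X,\mathbb{Z})\hookrightarrow H^1(V\cap X,\mathcal{M}^\times/\mathcal{O}^\times)$; these are compatible as $V$ shrinks because the function $f$ can be used uniformly, so passing to the limit over $\mathcal{V}_P$ yields an injection $\beta\colon\mathbb{Z}\hookrightarrow H^1(X,\mathcal{M}^\times/\mathcal{O}^\times)_P$ which is split as a map of abelian groups (the splitting $\operatorname{div}$ also passes to the limit).

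Next I would check that the image of $\beta$ actually lands in $A_{X,P}$, i.e.\ that a generator lifts to $H^1(X,\mathcal{M}^\times)_P$. Here I would invoke Remark~\ref{x1}: the obstruction to lifting lives in $H^2(X,\mathcal{O}^\times)_P$, and by the exponential sequence \eqref{eqx1} this is controlled by $H^2(X,\mathcal{O})_P$ and $H^3(X,\mathbb{Z})_P$. Because $Y$ is a complex surface and we may take the shrinking neighborhoods $V\cap X$ to be Stein-like (a polydisk minus part of an analytic hypersurface has the homotopy type of a finite complex of real dimension $\le 3$, but more to the point $V$ itself is Stein so $H^2(V,\mathcal{O})=0$, and $H^2(V\cap X,\mathcal{O})$ is then forced to be small by the excision/Mayer--Vietoris comparison with $H^3_D$), the relevant obstruction group vanishes in the limit; alternatively one argues directly that the Čech $1$-cocycle representing $\beta$ of a generator, built from powers of $f$ on the cover, is already a cocycle valued in $\mathcal{M}^\times$ up to a coboundary. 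Either way a generator of $\operatorname{im}(\beta)\cong\mathbb{Z}$ is hit by $H^1(X,\mathcal{M}^\times)_P$, so $\mathbb{Z}\subseteq A_{X,P}$.

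Finally, to upgrade "contains $\mathbb{Z}$" to "$\mathbb{Z}$ is a direct factor'', I would use that $\operatorname{div}$ restricted to this copy of $\mathbb{Z}$ is the identity: the composite $\mathbb{Z}\xrightarrow{\beta} H^1(X,\mathcal{M}^\times/\mathcal{O}^\times)_P \xrightarrow{\operatorname{div}} H^1(X,\nu_{D*}\mathbb{Z})_P$ followed by the canonical isomorphism $H^1(X,\nu_{D*}\mathbb{Z})_P\cong H^1((D\setminus\{P\}),\mathbb{Z})\cong\mathbb{Z}$ is the identity by Lemma~\ref{splitting}, and this factors through $A_{X,P}$. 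Since a subgroup of an abelian group on which a homomorphism to $\mathbb{Z}$ restricts to an isomorphism is automatically a direct summand, we conclude $A_{X,P}\cong\mathbb{Z}\oplus(\text{something})$. The main obstacle I expect is the middle step: verifying that the local gerbe obstruction vanishes, i.e.\ that $H^2(X,\mathcal{O}^\times)_P$ does not interfere, since this requires some care with the limit over neighborhoods and with the fact that $V\cap X$ is not itself Stein (it is $V$ minus a piece of $D$ near $P$). I would handle this by the explicit cocycle description rather than by a vanishing theorem, which sidesteps having to compute $H^2(V\cap X,\mathcal{O})$ precisely.
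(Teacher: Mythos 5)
Your proposal is correct and follows essentially the same route as the paper: apply the splitting construction of Lemma~\ref{splitting} to the punctured disk $D\cap X$, note that the resulting $\mathbb{Z}$ survives under shrinking neighborhoods because $H^1(D\cap X,\mathbb{Z})\to H^1(D\cap V\cap X,\mathbb{Z})$ is injective, and use $div$ to split it off in the limit. Your explicit-cocycle argument for the lifting step --- shrink $U$ so that $D$ is cut out by a single holomorphic function $f$, so that $f^{n_{\alpha\beta}}$ is already an $\mathcal{M}^\times$-valued cocycle (exactly as in part (i) of Proposition~\ref{f1}) --- is the right way to justify that the class lands in $A_{X,P}$, a point the paper's proof leaves implicit; your alternative route via vanishing of $H^2(X,\mathcal{O}^\times)_P$ would be harder to make rigorous since $V\cap X$ need not be Stein.
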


{\bf Proof.}

Fix $U\in \mathcal {V}_P$ such that there is a closed analytic subset $D\subset U$ with $D$ a disc and $D\cap X$ a puntured disk. Thus $H^1(D,\mathbb {Z}) \cong \mathbb {Z}$.
Lemma \ref{splitting} gives that $A[X\cap U]$ contains $\mathbb {Z}$ as a direct factor. Moreover, if we take a smaller neighborhood $V$ of $X$, this factor will map isomorphically
onto a factor of $A[X\cap V]$, because the map $H^1(D,\mathbb {Z}) \to H^1(D\cap V,\mathbb {Z})$ is injective (the loop around $P$ inducing a generator
of $H^1(D,\mathbb {Z})$ gives a non-torsion element of $H^1(D\cap V,\mathbb {Z})$).

 \hfill $\Box$
 
 Now we describe a cheap way to produce complex manifolds $X\subseteqq Y$ of dimension $n \ge 2$ and $P\in \overline{X}\setminus X$ such that $X$ is pseudoconcave at $P$.
Let $Y$ be a connected complex manifold. Fix $Q\in Y$ and and open neighborhood $U$ of $Q$ in $Y$ equipped with a biholomorphic map $j: U \to j(U)$
with $j(U)$ open subset of $\mathbb {C}^n$. Fix an open and strictly convex neighborhood $D$ of $j(Q)$ in $\mathbb {C}^n$ with $D\subset j(U)$ .
Set $X:= j^{-1}(\overline{D})$. Thus $\overline{X} = Y\setminus j^{-1}(D)$. Fix any $P\in \overline{X}\setminus X$, i.e. any $j(P)\in \partial D$. Since $\overline{D}$ is strictly
convex at $P$, there is a real affine hyperplane $H$ of $\mathbb {C}^n$ such that $H\cap \overline{D} =\{j(P)\}$. Since $n \ge 2$, there is an affine complex line $L\subset H$
such that $P\in L$. Hence there is an open disk $A$ of $\mathbb {C}$ such that $\overline{A} \subset j(U)$. The open subset $j^{-1}(A)$ of $Y$ is biholomorphic to
an open disc and $j^{-1}(A) \cap X = j^{-1}(A)\setminus \{P\}$. Thus $X$ is pseudoconcave at $P$.

\subsection{The cohomology of $\mathcal{M}^{\times}$ on non-compact complex manifolds}
In this subsection we give some results and examples on the size of $H^{i}(X,\mathcal{M}^{\times})$ for non-compact spaces.  The methods are similar to those used in  \cite{ChKeLe10}. 

\begin{prop}\label{f1}
Fix an integer $i \ge 1$. Let $X$ be a connected and paracompact $n$-dimensional complex manifold, $n \ge 2$, and $f$ a non-constant holomorphic function on $X$. For each $t\in \mathbb {C}$ set 
\[D_t= \{x |f(x)=t\}
\] with its scheme structure. Fix $\mu \in H^i(X,\mathbb {Z})$. For each $t\in \mathbb {C}$ such
that $D_t$ has no multiple component let 
\[\rho _t:  H^i(X,\mathbb {Z}) \to H^i(D_t,\mathbb {Z})\] denote the restriction map. Set $G_t:= Im (\rho _t)$.

\quad (i) Fix any $t\in \mathbb {C}$ such that $D_t$ is non-empty, reduced, irreducible and with singular locus of dimension at most $n-3$. Then there is an inclusion
\[j_t: G_t \hookrightarrow H^i(X,\mathcal {M}^\times)\] of abelian groups.

\quad (ii) Fix a set $I\subseteq \mathbb {C}$ such that $D_t$ is reduced, irreducible and with singular locus of dimension at most $n-3$ for each $t\in I$.
Assume that $\mu
_t:=
\rho _t(\mu )$ is not torsion for each $t\in I$ (case $i\ge 2$) or $\mu _t \ne 0$ for each $t\in I$ (case $i=1$). For each $t\in I$ set $u_t:= j_t(\mu _t)$. Then the elements $u_t$ in
$H^i(X,\mathcal {M}^\times )$ obtained using all $f_t$'s are $\mathbb {Z}$-independent. Hence $H^i(X,\mathcal {M}^\times )$
contains a free abelian group with a basis with the same cardinality as $I$.
\end{prop}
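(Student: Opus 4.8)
The plan is to prove Proposition \ref{f1} by combining the splitting construction of Lemma \ref{splitting} with a careful analysis of the local structure of the fibers $D_t$, and then to establish $\mathbb{Z}$-independence by testing against the ``divisor'' invariant, i.e. the map $div$ of Section \ref{mero}.

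\textit{Step 1: the inclusion $j_t$ in part (i).} Fix $t$ with $D_t$ reduced, irreducible, and with $\operatorname{codim}_{D_t}(\operatorname{Sing} D_t) \ge 2$ (this is what ``singular locus of dimension at most $n-3$'' gives, since $\dim D_t = n-1$). First I would observe that the assumption on the singular locus forces the normalization $\nu: \tilde D_t \to D_t$ to induce an isomorphism $H^i(D_t,\mathbb{Z}) \to H^i(\tilde D_t,\mathbb{Z})$ for $i=1$, and more generally an injection for the relevant range; indeed for a normal variety the singular locus has codimension $\ge 2$, and since $D_t$ is already normal in codimension one (it is reduced), the normalization is an isomorphism away from $\operatorname{Sing} D_t$, a set of codimension $\ge 2$, so the two spaces have the same $H^1$. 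Thus Lemma \ref{splitting} applied to the divisor $D_t$ on $X$ gives $H^i(\tilde D_t,\mathbb{Z}) \subset H^i(X,\mathcal{M}^\times/\mathcal{O}^\times)$, hence a copy of (a quotient of) $G_t \subseteq H^i(D_t,\mathbb{Z}) = H^i(\tilde D_t,\mathbb{Z})$ inside $H^i(X,\mathcal{M}^\times/\mathcal{O}^\times)$. To lift this into $H^i(X,\mathcal{M}^\times)$ I would use that the element $B(\mu_t)$ in fact comes from a global meromorphic function: the function $f - t$ has divisor supported on $D_t$, so by the Algebraic-Structure lemma (the displayed commuting diagram with top map cup-product by $f-t \in \mathcal{M}^\times(X)$), the cup product $\mu \smile (f-t) \in H^i(X,\mathcal{M}^\times)$ maps under $H^i(X,\mathcal{M}^\times) \to H^i(X,\mathcal{M}^\times/\mathcal{O}^\times)$ precisely to $B(\rho_t(\mu)) = B(\mu_t)$, since $D_t$ appears in $div(f-t)$ with multiplicity one ($D_t$ has no multiple component). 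Setting $j_t(\mu_t) := \mu \smile (f-t)$ gives the desired lift; injectivity of $j_t$ on $G_t$ will be subsumed in Step 2.

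\textit{Step 2: $\mathbb{Z}$-independence in part (ii).} Suppose $\sum_{k=1}^m a_k u_{t_k} = 0$ in $H^i(X,\mathcal{M}^\times)$ with distinct $t_1,\dots,t_m \in I$. Push this relation forward along $H^i(X,\mathcal{M}^\times) \to H^i(X,\mathcal{M}^\times/\mathcal{O}^\times) \xrightarrow{div} \bigoplus_{D \in IDiv(X)} H^i(D,\mathbb{Z})$, using the split surjection $div$ from Lemma \ref{splitting}. The image of $u_{t_k}$ is, by the divisor computation of the Algebraic-Structure discussion, the class $\mu_{t_k} = \rho_{t_k}(\mu)$ sitting in the $D_{t_k}$-summand (with multiplicity one). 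Since the fibers $D_{t_k}$ are pairwise \emph{distinct} irreducible divisors — distinct values of $f$ give disjoint, hence non-equal, fibers — these land in different direct summands, so the relation forces $a_k \mu_{t_k} = 0$ in $H^i(D_{t_k},\mathbb{Z})$ for each $k$. For $i \ge 2$ this contradicts $\mu_{t_k}$ not being torsion unless $a_k = 0$; for $i = 1$ the group $H^1(D_{t_k},\mathbb{Z})$ is torsion-free (universal coefficients, as in the proof of Lemma \ref{u1}), so $\mu_{t_k} \ne 0$ forces $a_k = 0$. This proves $\mathbb{Z}$-independence, and in particular $j_t$ is injective, completing (i) as well. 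The final sentence on the cardinality of a basis is then immediate.

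\textit{Main obstacle.} The delicate point is Step 1 — specifically, verifying that under the codimension hypothesis on $\operatorname{Sing} D_t$ one really gets $G_t$ (not merely some quotient) inside $H^i(X,\mathcal{M}^\times)$, which requires the comparison $H^i(D_t,\mathbb{Z}) \cong H^i(\tilde D_t,\mathbb{Z})$ in the stated range to be exactly what Lemma \ref{splitting} feeds on, and that the multiplicity-one statement ``$D_t$ has no multiple component $\Rightarrow$ $D_t$ appears in $div(f-t)$ with coefficient $1$'' is used correctly so that the lift maps to $B(\mu_t)$ on the nose rather than to a multiple. One must also make sure that the constructible-sheaf/Leray argument inside Lemma \ref{splitting} still applies when $D_t$ is only the scheme-theoretic fiber (which is fine, as we only feed in its reduced support $Y$). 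Everything else — the projection-formula-free independence via the split map $div$, and the torsion-freeness for $i=1$ — is routine given the earlier results.
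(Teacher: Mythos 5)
Your proposal is correct and rests on the same core mechanism as the paper's proof: define $j_t(\mu_t)$ as the cup product of $\mu$ with $f-t$, and detect non-triviality and $\mathbb{Z}$-independence by reading off the order of vanishing along each $D_{t_k}$, using that $f-t_k$ has divisor exactly $D_{t_k}$ (multiplicity one, and it is a nonzero constant on $D_{t_l}$ for $l\ne k$). The difference is one of packaging: the paper carries this out entirely at the \v{C}ech level, representing $\mu$ by an integral cocycle $m$, setting $g_{\alpha_0\cdots\alpha_i}=(f-t)^{m_{\alpha_0\cdots\alpha_i}}$, and defining the ``multiplicity along $D_t$'' of a meromorphic cochain by hand (well-defined because $(D_t)_{reg}\cap V_{\alpha_0\cdots\alpha_{i-1}}$ is connected, $D_t$ being normal), whereas you route the same computation through Lemma \ref{splitting} and the commuting square of the Algebraic Structure subsection. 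That is a legitimate and arguably cleaner formulation, with two caveats. First, your justification of $H^i(D_t,\mathbb{Z})\cong H^i(\tilde D_t,\mathbb{Z})$ is shaky as written: reducedness does not give normality in codimension one, and ``isomorphism away from a codimension-two set'' does not by itself control cohomology. The correct (and simpler) statement, which is exactly what the paper proves first, is that $D_t$ is normal outright --- it is a hypersurface in a smooth manifold, hence Cohen--Macaulay (condition $S_2$), and the hypothesis on the singular locus gives $R_1$, so Serre's criterion applies --- and therefore $\tilde D_t=D_t$ and there is nothing to compare. Second, the map $div$ is defined only relative to a fixed divisor, so the target of your detection map should be the finite sum $\bigoplus_k H^i(D_{t_k},\mathbb{Z})$ attached to $Y=D_{t_1}\cup\cdots\cup D_{t_m}$ rather than a direct sum over all of $IDiv(X)$; since any alleged relation involves only finitely many $t$'s, this costs nothing. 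With those repairs your argument goes through and proves both (i) and (ii).
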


{\bf Proof.}

Fix $t\in \mathbb {C}$ such that $D_t$ is non-empty. Since $X$ is smooth, each local ring $\mathcal {O}_{X,x}$, $x\in X$, has depth $n$. Hence
for each $t\in \mathbb {C}$ and each $x\in D_t$, the local ring $\mathcal {O}_{X_t,x}$ has depth $n-1$. Thus if the singular locus of $D_t$ has codimension
at least $2$ in $D_t$, then $D_t$ is a normal complex space (\cite{ei} Theorem 1.5 or Exercise 11.10). Hence for every 
non-empty and connected
open subset $U$ of $D_t$ the manifold
$U_{reg}=U\cap (D_t)_{reg}$ is connected.  The assumptions on the singular locus of $D_{t}$ imply that $D_{t}$ is locally irreducible and hence one can calculate the divisor of any meromorphic function defined in a small enough neighborhood of any point of $D_{t}$.

\quad (a) Fix $t\in I$. To get part (i) it is sufficient to prove $j_t(\rho _t(\eta ))\ne 0$ for each $\eta \in H^i(X,\mathbb {Z})$ such that $\rho _t(\eta )\ne 0$. Let $\{V_\alpha \}$ be an open covering
of $X$ such that all finite intersections of elements of $V_\alpha$ and all finite intersections of elements of $V_\alpha$ and $D_t$ are either empty or contractible (and in particular connected). It exists, because we may take a triangulation of $X$ for which $D$ is union of cells.
Notice that $\{V_\alpha \}$ is an acyclic covering of $X$ for the sheaf
$\mathbb {Z}$
such that $\{V_\alpha \cap D_t\}$ is an acyclic covering of $D_t$ for the sheaf $\mathbb {Z}$.
Thus the \v{C}ech cohomology of $\{V_\alpha \}$ with coefficients in $\mathbb{Z}$ computes $H^i(X,\mathbb {Z})$ and the \v{C}ech cohomology of $\{V_\alpha \cap D_t \}$  with coefficients in $\mathbb{Z}$ computes $H^i( D_t ,\mathbb {Z})$. Let 
$m =\{m_{\alpha _0\cdots \alpha _i} \}$
be a cocycle computing $\eta$ with respect to the cover $\{V_\alpha \}$. The maps $j_{t}$ is defined by letting $j_t(\rho _t(\eta ))$ to be represented by the $i$-cocycle $g$ with coefficients of $\mathcal {M}_X^\times$ with respect to  $\{V_\alpha \}$ where 
\[g_{\alpha _0\cdots \alpha _i}=
(f-t)^{m_{\alpha _0\cdots \alpha _i}}.\]  Since $f-t$ is fixed, the map $j_t$ is a homomorphism of abelian groups. We claim that this cocycle is not cohomologous to zero. Indeed, assume that it is cohomologous to zero. The map
$j_t$ commutes with refinements of open coverings of $X$. Thus (refining if necessary the covering) we may assume the existence of an $(i-1)$-cochain $h = \{h_{\alpha _0\cdots \alpha _{i-1}} \}$ with respect to  $\{V_\alpha \}$
such that $g = \delta (h)$, where $\delta$ is the \v{C}ech boundary. Let $e_{\alpha _0\cdots \alpha _{i-1}}$ be the multiplicity of $h_{\alpha _0\cdots \alpha _{i-1}}$
along $D_t\cap V_{\alpha _0\cdots \alpha _{i-1}}$; $e_{\alpha _0\cdots \alpha _{i-1}}$ is a well-defined integer, because $(D_t)_{reg}\cap V_{\alpha _0\cdots \alpha _{i-1}}$ is connected. Looking at the
order of vanishing along $D_t$ we see that if $V_{\alpha _0\cdots \alpha _i}\cap D_t\ne \emptyset$, then 
\[m_{\alpha _0\cdots \alpha _i} = (\delta e)_{\alpha _0\cdots \alpha _i}.\] Thus the restriction $m|_{D_{t}}$ is a trivial cocycle with coefficients in $\mathbb{Z}$ with respect to $\{V_\alpha \cap D_t\}$, a contradiction.

\quad (b) Fix $I\subseteq \mathbb {C}$, $\mu$ and $u_t$ as in part (ii). Let $b$ be a cocycle computing $\mu$ with respect to the cover $\{V_{\alpha}\}$.   
 Suppose that the elements $u_t$, $t\in I$, are not $\mathbb {Z}$-independent. Hence there are finitely many $t_j\in I$, $1\le j \le s$, and integers $n_j$, $1 \le j
\le s$, such that the cochain $g'$ defined by
\[g'_{\alpha _0\cdots \alpha _i} = \prod _{j=1}^{s}(f-t_j)^{n_jm_{\alpha _0\cdots \alpha _i}}\] is cohomologous to zero and at least one of the $n_{j}$ is not zero. Thus (refining if necessary the covering) we may assume the existence of an $(i-1)$-cochain $h'$ with respect to  $\{V_\alpha \}$
such that $g' = \delta h'$. We refine the covering $\{V_\alpha \}$ so that finite intersections of $V_\alpha$ and $D_{t_j}$, $1 \le j \le s$,
are either empty or connected and that $V_\alpha \cap D_x \cap D_y =\emptyset$ for all $\alpha$ and all $x, y\in \{t_1,\dots ,t_s\}$ such that $x\ne y$. Fix $j\in \{1,\dots ,s\}$. For each open set $V_{\alpha _0\cdots \alpha _i}$ such that $V_{\alpha _0\cdots \alpha _i}\cap D_{t_j} \ne \emptyset$ 
the function $g'_{\alpha _0\cdots \alpha _i}$ has order of vanishing (or pole) $n_jm_{\alpha _0\cdots \alpha _i}$ at a general
point of $V_{\alpha _0\cdots \alpha _i}\cap D_{t_j}$. Call $c_{\alpha _0\cdots \alpha _{i-1}}$ the order of vanishing or pole
of $h'_{\alpha _0\cdots \alpha _{i-1}}$ at a general point of $V_{\alpha _0\cdots \alpha _{i-1}}\cap D_{t_j}$ with
the convention that this order is zero if $V_{\alpha _0\cdots \alpha _{i-1}}\cap D_{t_j} = \emptyset$. We get a $i-1$ cochain
$c
= \{c_{\alpha _0\cdots \alpha _{i-1}}\}
$ for the sheaf $\mathbb{Z}$ with respect to the cover $\{ V_{\alpha} \cap D_{t_j} \}$ such that \[\delta (c)=n_j\rho_{t_j}(b)\]
and therefore $n_{j} \rho_{t_j}(\mu) = 0$.
The universal coefficient theorems gives that $H^1(D_{t_j},\mathbb {Z})$ has no torsion. Hence $\mu _{t_j}$ is not torsion, even in the case $i=1$. Since $\rho _{t_j}(\mu )$ is not a torsion class, we get $n_j=0$. Since this is true for all $j\in \{1,\dots ,s\}$,
we get a contradiction and prove part (ii).

 \hfill $\Box$

In some particular cases we may find a set $I\subseteq \mathbb {C}$ as in part (b) of Proposition \ref{f1} and with cardinality $2^{\aleph _0}$ (even when $H^i(X,\mathbb {Z})$ is a finitely generated
abelian group). We give the following example.

\begin{ex}\label{f2}
Fix an $(n-1)$-dimensional connected Stein manifold $Y$
such that $H^1(Y,\mathbb {Z}) \ne 0$, an open subset $\Omega \subseteq \mathbb {C}$ and $a\in \Omega$.
Set $X:= \Omega \times Y$ and $f:= \pi _1$, where $\pi _1: X \to \Omega$ denotes the projection. We may take $I:= \Omega$. Hence $I$
has  cardinality $2^{\aleph _0}$. Since $Y$ and $\Omega$ are Stein spaces, $X$ is a Stein space. Hence $H^k(X,\mathcal {O})=0$ for all $k\ge 1$.
Thus the exponential sequence gives $H^k(X,\mathcal {O}^{\times}) \cong H^{k+1}(X,\mathbb {Z})$ for all $k\ge 1$. Since $\Omega$ is an open subset of $ \mathbb {C}$, we have
$H^m(\Omega ,\mathbb {Z})=0$ for all $m\ge 2$. Hence if $\Omega$ is simply connected and $H^2(Y,\mathbb {Z}) = H^3(Y,\mathbb {Z}) =0$, then K\"{u}nneth's formula gives
$H^2(X,\mathbb {Z}) = H^3(X,\mathbb {Z})=0$. Thus $H^1(X,\mathcal {O}^{\times })$ and $H^2(X,\mathcal {O}^{\times })$ are both trivial. Thus $H^1(X,\mathcal {M}^\times ) \cong H^1(X,\mathcal{M}^{\times}/\mathcal{O}^{\times})$.
\end{ex}

\section{Open Questions}
Are the functors $X \mapsto H^{1}(X,\mathcal{M}^{\times})$ representable and can interesting birational invariants be extracted from these groups?  Are there classes in $H^{1}(X,\mathcal{M}^{\times}/\mathcal{O}^{\times})$ that do not come from any divisor from via the map $B$?   This question concerns the question of the acyclicity of the intersection over all irreducible divisors $Y$ of the sheaf $\mathcal{C}_{Y}$ found in equation (\ref{CEqn}).  This question would be especially interesting on a complex manifold without global divisors.

\vskip 0.2in \noindent {\scriptsize {\bf Edoardo Ballico,}
Dept. of Mathematics, University of Trento, 38123 Povo (TN), Italy,
ballico@science.unitn.it }
\vskip 0.2in \noindent {\scriptsize {\bf Oren Ben-Bassat,}
Department of Mathematics, University of Haifa, Mount Carmel, Haifa 31905, Israel, oren.benbassat@gmail.com}


\end{document}